\newcommand{\tRe}{\textup{Re }}
\newcommand{\R}{\mathbb{R}}
\newcommand{\C}{\mathbb{C}}
\newcommand{\es}[1]{\begin{equation}\begin{split}#1\end{split}\end{equation}}
\newcommand{\est}[1]{\begin{equation*}\begin{split}#1\end{split}\end{equation*}}
\renewcommand{\mod}[1]{~\pr{\textnormal{mod}~#1}}
\newtheorem{thm}{Theorem}[section]
\newtheorem{corollary}[thm]{Corollary}
\newtheorem{lem}[thm]{Lemma}
\newtheorem{defi}[thm]{Definition}
\theoremstyle{remark}
\newtheorem{rem}{Remark}
\newtheorem{rem*}{Remark}
\newcommand{\pr}[1]{\left( #1\right)}
\let\originalleft\left
\let\originalright\right
\renewcommand{\left}{\mathopen{}\mathclose\bgroup\originalleft}
\renewcommand{\right}{\aftergroup\egroup\originalright}
\numberwithin{equation}{section}
\newcommand{\Rep}{\textrm{Re}}
\begin{document}
\title{On Benford's Law for multiplicative functions}

\date{
\today}

\author[V. Chandee]{Vorrapan Chandee}
\address{Mathematics Department \\ Kansas State University \\ Manhattan, KS 66503}
\email{chandee@ksu.edu}

\author[X. Li]{Xiannan Li}
\address{Mathematics Department \\ Kansas State University \\ Manhattan, KS 66503}
\email{xiannan@math.ksu.edu }

\author{Paul Pollack}
\address{Mathematics Department \\ University of Georgia \\ Athens, GA 30602}
\email{pollack@uga.edu}

\author{Akash Singha Roy}
\address{ESIC Staff Quarters No.: D2\\ 143 Sterling Road, Nungambakkam\\Chennai 600034\\ Tamil Nadu, India}
\email{akash01s.roy@gmail.com}

\subjclass[2010]{11A41, 11N60, 11B99}
\keywords{Benford's Law, multiplicative functions, Hal\'asz's theorem}

\allowdisplaybreaks
\numberwithin{equation}{section}
\begin{abstract} We provide a criterion to determine whether a real multiplicative function is a strong Benford sequence.  The criterion implies that the $k$-divisor functions, where $k \neq 10^j$, and Hecke eigenvalues of newforms, such as Ramanujan tau function, are strong Benford. Moreover, we deduce from the criterion that the collection of multiplicative functions which are not strong Benford forms a group under pointwise multiplication.  In contrast to earlier work, our approach is based on Hal\'{a}sz's Theorem. 

\end{abstract}

\maketitle

\section{Introduction}

Benford's law is a phenomenon about the first digits of the numbers in data sets. In particular, the leading digits do not exhibit uniform distribution as might be naively expected, but rather, the digit $1$ appears the most, followed by $2, 3$, and so on until $9$. More precisely, we define Benford's law for a sequence of numbers below.
 
\begin{defi} Let $\{a_k\}$ be a sequence of { positive} real numbers. Suppose that 
$$a_k = 10^{M_k }\sum_{j = 0}^\infty d_j^{(k)}10^{-j}, $$
where $M_k \in \mathbb{Z}$, $1 \leq d_0^{(k)} \leq 9,$ and $0 \leq d_j^{(k)} \leq 9$ for all $j \geq 1.$ $\{a_k\}$ satisfies strong Benford's law and is called a {\it strong Benford sequence} or {\it strong Benford}, if for all strings $S = d_0d_1...d_{K-1}$, where $d_0 \neq 0$, and for all positive integers $K$, 
\est{ \lim_{N \rightarrow \infty} \frac{\# \left\{ k \leq N \ | \  d_{0}^{(k)} d_1^{(k)} d_2^{(k)} .... d_{K-1}^{(k)} = d_0d_1d_2...d_{K-1}\right\}}{N} &= \log_{10}\left( r + 10^{-K + 1}\right) - \log_{10} r \\
&= \log_{10}\left( 1 + \frac 1S\right),}
where $r = \sum_{j= 0}^{K-1} d_j 10^{-j}.$

\end{defi}

\begin{defi} Let $\{b_k\}$ be a sequence of \underline{non-negative} real numbers. Let $\{b_j'\}$ be the subsequence of $\{b_k\}$ obtained by removing all zero terms. We say that $\{b_k\}$ is strong Benford if $\{b_j'\}$ is strong Benford. 
\end{defi}

\begin{rem}

When $K = 1,$ the condition above becomes that for $d = 1, 2, ..., 9,$ 
$$ \lim_{N \rightarrow \infty} \frac{ \# \left\{  k \leq N  \ | \ d_0^{(k)} = d\right\}}{N} = \log_{10}(d + 1) - \log_{10} d = \log_{10} \left( 1 + \frac 1d\right).$$
From now on, we will  refer to {\it strong Benford's law} simply as ``Benford's law.''

\end{rem}
If a sequence of numbers is Benford, then the probability that its leading digit is 1 is   $\log 2 \approx 30.1 \%$ while that of $9$ as a leading digit is only $\log(1+1/9) \approx 4.6\%$.  The Benford's law phenomena was first observed by the astronomer Simon Newcomb in 1881 \cite{Newcomb} when he noticed that the first pages of a book of logarithm tables were the most worn. Later, in 1938, Frank Benford \cite{Benford} discovered similar patterns and found numerical evidence from a multitude of data sets, e.g. population numbers, areas of rivers, and physical constants. 

A number of familiar sequences in mathematics such as Fibonacci sequences, exponential functions, and factorial functions have been proven to follow Benford's law.  Sequences in number theory appear as well.  For instance, Kontorovich and Miller \cite{KM} showed that the distribution of values of L-functions and certain statistics concerning the iterates of the $3x+1$ problem follow Benford's law.  A refinement of Lagarias and Soundararajan \cite{LS} proved that iterates of $3x+1$ problem follow Benford's law for most initial seeds. Recently, the first author and Aursukaree \cite{AC} proved that the divisor function, which counts the number of positive divisors of $n$, is Benford. 
The proof of this was an application of the Selberg-Delange method.

The divisor function is an example of the wider class of real multiplicative functions.  We say that $h$ is a real multiplicative function if $h\colon \mathbb N \rightarrow \mathbb R$ satisfies that for all natural numbers $m$ and $n$ with $(m, n) = 1,$ 
$$h(mn) = h(m) h(n). $$ 
In this article, we will give a criterion for when real multiplicative functions follow Benford's law. Then we will apply the criterion to a number of interesting examples, including the $k$-divisor functions, Euler's Phi function and Hecke eigenvalues of new forms. 

The proof of our criterion is a nice application of Hal\'asz's theorem on the sum of multiplicative functions. Roughly speaking, we find that $\{h(n)\}$ is a strong Benford sequence if and only if $e^{2\pi i \ell \log_{10} |h(n)|}$ is not ``close to'' $n^{i\alpha}$ for all nonzero integers $\ell$ and all real $\alpha.$ Another important feature of the criterion is that we can determine if $\{h(n)\}$ is Benford as soon as information at prime numbers is known.  In Section \ref{sec:MainThm}, we will provide the statement of Hal\'{a}sz's theorem and Weyl's criterion and then explicitly state and prove our main criterion (Theorem \ref{thm:Benfordcriterion}).  Before that, we illustrate our criterion with some applications below.

\subsection{Applications of the criterion - Theorem \ref{thm:Benfordcriterion}} 
We start with the sequence $\{ n^a\}$ where $a$ is fixed. Previously, this sequence has been proven to be not strong Benford by other methods, e.g. Fejer's Theorem (see \cite{Zheng}). Applying Theorem \ref{thm:Benfordcriterion}, we have an alternative simple proof of the same result.
\begin{corollary} \label{cor:poly} Let $a$ be a fixed real constant. Then  
$\{n^a\}$ is not a strong Benford sequence.
\end{corollary}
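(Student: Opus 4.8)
The plan is to apply the criterion of Theorem~\ref{thm:Benfordcriterion} directly to the completely multiplicative function $h(n) = n^a$, which is plainly a real multiplicative function and so satisfies the hypotheses. Recall from the heuristic description of the criterion that $\{h(n)\}$ fails to be strong Benford precisely when the oscillating factor $e^{2\pi i \ell \log_{10}|h(n)|}$ resonates with some archimedean character $n^{i\alpha}$, for a nonzero integer $\ell$ and a real $\alpha$. For $h(n) = n^a$ this resonance turns out to be not merely approximate but exact, which is what makes the corollary immediate.

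Concretely, I would first compute $\log_{10}|h(n)| = a\log_{10} n = \frac{a}{\log 10}\log n$, so that for every integer $\ell$,
\[
e^{2\pi i \ell \log_{10}|h(n)|} = \exp\left(\frac{2\pi i \ell a}{\log 10}\,\log n\right) = n^{i\alpha}, \qquad \alpha := \frac{2\pi \ell a}{\log 10}.
\]
Thus for any nonzero $\ell$ the factor $e^{2\pi i \ell \log_{10}|h(n)|}$ coincides \emph{identically} with $n^{i\alpha}$ for this choice of real $\alpha$: the ``distance'' between them is zero. This is the strongest possible violation of the condition in Theorem~\ref{thm:Benfordcriterion} that would guarantee Benford behavior, so the criterion yields at once that $\{n^a\}$ is not strong Benford. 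The degenerate case $a=0$ is still covered by the same argument, with the resonance holding at $\alpha = 0$; here $h(n)\equiv 1$, the sequence is constant with leading digit always $1$, and it is trivially not Benford.

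To make transparent the mechanism underlying the criterion, I would also note the first-principles version: $\{n^a\}$ is strong Benford if and only if $\{a\log_{10} n\}_{n}$ is equidistributed modulo $1$, and this can be tested against Weyl's criterion. Writing $t = 2\pi\ell a/\log 10$, one has $e^{2\pi i \ell a\log_{10} n} = n^{it}$, and by partial summation (or Euler--Maclaurin) $\frac1N\sum_{n\le N} n^{it} \sim \frac{N^{it}}{1+it}$, whose modulus converges to $(1+t^2)^{-1/2} > 0$ rather than to $0$. Hence the Weyl averages do not vanish in the limit and equidistribution fails, in agreement with the criterion.

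I expect no genuine obstacle here; the statement is essentially a sanity check that the criterion reproduces known results. The only points requiring care are the bookkeeping of the constant $2\pi/\log 10$ that converts between the base-$10$ and natural logarithms, and the observation that the nonzero integer $\ell=1$ already suffices to produce the resonance, so that the failure condition of Theorem~\ref{thm:Benfordcriterion} is triggered for \emph{some} admissible pair $(\ell,\alpha)$.
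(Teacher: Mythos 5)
Your proposal is correct and follows essentially the same route as the paper: identify $e^{2\pi i\ell\log_{10}|n^a|}$ with $n^{i\alpha}$ for $\alpha = 2\pi\ell a/\log 10$, observe that the pretentious distance is exactly zero, and invoke the criterion (the paper uses Theorem~\ref{thm:Benfordcri2}, the nonvanishing special case, which applies since $n^a\neq 0$). The supplementary Weyl/partial-summation verification is a harmless extra not present in the paper.
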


Next, we will consider the $k$-divisor function, which counts the number of ways to write $n$ as the product of $k$ natural numbers, i.e.
$$ d_k(n) = \sum_{n_1n_2....n_k = n}  \ 1.$$  
As previously mentioned, $\{d_2(n)\} = \{d(n)\}$ is a strong Benford sequence. The first author and Aursukaree used Selberg-Delange's method \cite[Chapter 7]{MV}, which involves analysis with contour integrals. Theorem \ref{thm:Benfordcriterion} applies to $d_k(n)$ for general $k$ to give an alternate simpler proof. 
\begin{corollary} \label{cor:dkn}
Let $k \geq 2$, and $d_k(n)$ be the k-divisor function.
Then $\{d_k(n)\}$ is a strong Benford sequence if and only if $k \neq 10^{j}$ for all positive integers $j$. 
\end{corollary}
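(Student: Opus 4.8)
The plan is to invoke the main criterion, Theorem \ref{thm:Benfordcriterion}. According to the informal description in the introduction, $\{h(n)\}$ is strong Benford precisely when $e^{2\pi i \ell \log_{10} |h(n)|}$ fails to correlate with $n^{i\alpha}$ for every nonzero integer $\ell$ and every real $\alpha$, and this correlation can be detected at the primes alone. So the first step is to understand, for $h = d_k$, what the values $\log_{10}|d_k(p)|$ look like on primes $p$. Since $d_k(p) = k$ for every prime $p$ (as the only factorizations of a prime into $k$ ordered factors place the prime in one slot and $1$ elsewhere), we have $\log_{10}|d_k(p)| = \log_{10} k$, a constant independent of $p$.

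The key observation is that the Benford criterion will reduce to comparing the \emph{constant} value $\ell \log_{10} k$ against the contribution of $n^{i\alpha}$ at primes. Concretely, I expect the criterion to assert non-Benfordness exactly when there is a nonzero integer $\ell$ and a real $\alpha$ such that $e^{2\pi i \ell \log_{10} k}$ matches $p^{i\alpha}$ in an averaged sense over primes. Because $\log_{10}|d_k(p)|$ is constant in $p$, the relevant multiplicative function $p \mapsto e^{2\pi i \ell \log_{10}|d_k(p)|} p^{-i\alpha}$ has the form (constant)$\cdot p^{-i\alpha}$, and the obstruction to Benfordness in Hal\'asz-type statements is governed by the choice $\alpha = 0$. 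With $\alpha = 0$ the function becomes the genuine constant $e^{2\pi i \ell \log_{10} k}$ on primes, which is close to $1$ (the pretentious threshold) precisely when $\ell \log_{10} k \in \mathbb{Z}$ for some nonzero integer $\ell$.

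Thus the second step is the purely arithmetic reduction: $\{d_k(n)\}$ is \emph{not} strong Benford if and only if there exists a nonzero integer $\ell$ with $\ell \log_{10} k \in \mathbb{Z}$, i.e. if and only if $\log_{10} k$ is rational. Writing $\log_{10} k = a/b$ in lowest terms with $b \geq 1$ forces $k^b = 10^a$, and a unique-factorization argument on the primes $2$ and $5$ shows this holds exactly when $k$ is an integer power of $10$, that is $k = 10^j$ for some positive integer $j$ (the case $j \geq 1$ since $k \geq 2$). Hence $\log_{10} k$ is irrational precisely when $k \neq 10^j$, which is exactly the asserted equivalence.

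The main obstacle I anticipate is not the arithmetic but correctly feeding $d_k$ into Theorem \ref{thm:Benfordcriterion}: one must verify that $d_k$ satisfies whatever growth and regularity hypotheses the criterion imposes (e.g. a mean-value or log-average condition on $\log_{10}|d_k(p)|$ over primes, and control over prime powers $d_k(p^m) = \binom{m+k-1}{m}$ so that the prime-power contributions are negligible), and that the criterion's ``closeness'' condition is genuinely equivalent to the constant $e^{2\pi i \ell \log_{10} k}$ being a root of unity condition rather than hiding a nontrivial $\alpha \neq 0$ possibility. Once the criterion is shown to apply with $\alpha$ forced to $0$, the remaining work is the elementary rationality computation above.
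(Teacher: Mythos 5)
Your overall strategy matches the paper's: since $d_k(p)=k$ for every prime $p$, the function $f_\ell(p)=e^{2\pi i\ell\log_{10}k}$ is constant on primes, and the non-Benford direction follows by taking $\alpha=0$, $\ell=1$ when $k=10^j$ (the distance $\mathbb D(f_1,1;\infty)$ then vanishes). Your arithmetic reduction --- $\log_{10}k$ rational iff $k^b=10^a$ iff $k=10^j$ --- is also exactly what is needed for the case $\alpha=0$. Two of your anticipated obstacles are non-issues: $d_k$ is never zero, so Theorem \ref{thm:Benfordcri2} applies with no hypothesis on $\mathcal T_N$, and the distance $\mathbb D$ is a sum over primes only, so the values $d_k(p^m)=\binom{m+k-1}{m}$ never enter.

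The genuine gap is the step where you assert that ``the obstruction to Benfordness \ldots is governed by the choice $\alpha=0$'' and that $\alpha$ is ``forced to $0$.'' Theorem \ref{thm:Benfordcri2} requires $\mathbb D(f_\ell,n^{i\alpha};\infty)=\infty$ for \emph{every} real $\alpha$, and for a unimodular constant $c=e^{2\pi i\ell\log_{10}k}$ one has
$$\mathbb D(c,n^{i\alpha};x)^2=\sum_{p\le x}\frac 1p-\cos(2\pi\ell\log_{10}k)\sum_{p\le x}\frac{\cos(\alpha\ln p)}{p}-\sin(2\pi\ell\log_{10}k)\sum_{p\le x}\frac{\sin(\alpha\ln p)}{p},$$
so finiteness could a priori occur for some $\alpha\neq 0$ if the partial sums $\sum_{p\le x}p^{-1-i\alpha}$ grew like $\ln\ln x$ with a fixed phase. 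Ruling this out is not formal: the paper proves (Lemma \ref{lem:sump1}) that $\sum_{p\le x}p^{-1-i\alpha}=O_\alpha(1)$ for $\alpha\neq 0$, by relating the sum to $\ln\zeta(1+1/\ln x+i\alpha)$ via Mertens' theorem and the Euler product, and then invoking bounds for $\zeta$ near the line $\mathrm{Re}\,s=1$ (in particular the nonvanishing of $\zeta(1+i\alpha)$, which is prime-number-theorem-strength input). Without this lemma, or an equivalent estimate, your argument establishes only the ``$k=10^j$ implies not Benford'' direction and the $\alpha=0$ part of the converse; the case $\alpha\neq 0$ remains open in your write-up, and it is precisely where the analytic content of the corollary lives.
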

Note that when $k$ is a power of 10,
the first digit of $d_k(n)$ is 1 for all squarefree integers $n$. Since the number of squarefree integers up to $x$ is about $\frac{6}{\pi^2} x$, the probability that the first digit of $d_k(n)$ being $1$ exceeds $60.8\% > 30.1\%$.

Another important multiplicative function is Euler's phi function $\varphi(n)$, which counts the number of positive integers up to $n$ that are relatively prime to $n$. The formula is
$$ \varphi(n) = n \prod_{p | n} \left( 1 - \frac 1p\right).$$
\begin{corollary} \label{cor:phi}
Let $\varphi(n)$ be Euler's phi function.  Then $\{\varphi(n)\}$ is not a strong Benford sequence. 
\end{corollary}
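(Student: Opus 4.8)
The plan is to invoke Theorem \ref{thm:Benfordcriterion} in its negative direction. To show $\{\varphi(n)\}$ is \emph{not} strong Benford, I will produce a nonzero integer $\ell$ and a real number $\alpha$ for which the multiplicative function $n \mapsto e^{2\pi i \ell \log_{10}\varphi(n)}$ is ``close to'' $n^{i\alpha}$ in the sense of the criterion, i.e.\ for which the associated prime series $\sum_p \frac{1 - \mathrm{Re}\bigl(e^{2\pi i \ell \log_{10}\varphi(p)}\, p^{-i\alpha}\bigr)}{p}$ converges. This is exactly the behaviour that the Benford property must exclude, so exhibiting one such pair $(\ell,\alpha)$ forces the conclusion. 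The guiding heuristic is that $\varphi$ is multiplicatively very close to the identity, so $\log_{10}\varphi(n)$ ought to behave like $\log_{10} n$, which already fails Benford by Corollary \ref{cor:poly}; the proof will make this precise while controlling the correction factor $\prod_{p\mid n}(1-1/p)$.

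Concretely, I would take any nonzero $\ell$ (say $\ell=1$) and choose $\alpha = 2\pi\ell/\ln 10$, the unique value making $n^{i\alpha} = e^{2\pi i \ell \log_{10} n}$. Since the criterion only requires information at the primes, I compute the phase $\theta_p := 2\pi\ell\log_{10}\varphi(p) - \alpha\ln p$ using $\varphi(p) = p-1$. With this $\alpha$ the dominant term $2\pi\ell\log_{10}p$ cancels exactly against $\alpha\ln p = 2\pi\ell\log_{10}p$, leaving $\theta_p = 2\pi\ell\log_{10}\!\bigl(1 - \tfrac1p\bigr) = -\frac{2\pi\ell}{p\ln 10} + O\!\bigl(\tfrac{1}{p^2}\bigr)$. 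Because $|\theta_p| = O(1/p)$, we get $1 - \mathrm{Re}(e^{i\theta_p}) = 1 - \cos\theta_p = O(\theta_p^2) = O(1/p^2)$, so that $\sum_p \frac{1-\cos\theta_p}{p} = \sum_p O(p^{-3})$ converges comfortably. The contribution of prime powers is $O\bigl(\sum_{p,\,k\ge 2} p^{-k}\bigr)$ and hence harmless, confirming that the prime data suffice. By Theorem \ref{thm:Benfordcriterion} this convergence shows $e^{2\pi i\ell\log_{10}\varphi(n)}$ pretends to be $n^{i\alpha}$, and therefore $\{\varphi(n)\}$ is not strong Benford.

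I expect the only delicate points to be cosmetic: matching the exact formulation of Theorem \ref{thm:Benfordcriterion} — whether the pretentious distance is phrased as convergence of the series over primes or as a Halász-type bound over prime powers — and checking that the $O(1/p^2)$ error from expanding $\log_{10}(1-1/p)$ really is summable after division by $p$. Both are routine. The substantive content is the exact cancellation of the main term produced by the choice $\alpha = 2\pi\ell/\ln 10$, which collapses the whole question to the trivially summable size of $\log_{10}(1-1/p)$; this is the same mechanism underlying Corollary \ref{cor:poly}, with $\varphi$ playing the role of the ``perturbed identity'' $n \mapsto n$.
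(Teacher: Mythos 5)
Your proposal is correct and follows essentially the same route as the paper: the same choice $\ell=1$, $\alpha = 2\pi/\ln 10$, the same exact cancellation of the main term leaving the phase $2\pi\ell\log_{10}(1-1/p) = O(1/p)$, and the same bound $1-\cos\theta_p = O(1/p^2)$ giving convergence of the pretentious distance. (The remark about prime powers is unnecessary since the distance $\mathbb{D}$ is defined as a sum over primes only, but it does no harm.)
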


Finally, let $f(z) = \sum_{n = 1}^\infty \lambda_f(n) q^n \in S_k^{new}(\Gamma_0(N))$ where $q = e^{2\pi i z}$,  be a newform (i.e., a holomorphic cuspidal normalized Hecke eigenform) of even weight $k$ and trivial nebentypus on $\Gamma_0(N)$ that does not have complex multiplication. We will investigate the sequence of Fourier coefficients $\{ \lambda_f(n)\}$. A classic example of this type of sequence is the Ramanujan tau function $\tau(n)$. Previously, Jameson, Thorner and Ye \cite{JTY} showed that the sequence $\{\lambda_f(p)\}$, where $p$ is prime, does not satisfy Benford's law, but it does follow Benford's law with logarithmic density, which is \footnote{We state only a special case of \cite{JTY}, where the base is 10.}
$$ \lim_{n \rightarrow \infty} \frac{ \sum_{\substack{p \leq n \\ \textrm{the first $K$-digits of $\lambda_f(p)=  S$}}} \frac 1p}{\sum_{p \leq n} \frac{1}{p}} = \log_{10} \left( 1 + \frac 1S\right). $$
Our Theorem \ref{thm:Benfordcriterion} allows us to consider the sequence over natural numbers $n$, not restricted to primes. 

\begin{corollary} \label{cor:newforms} Let $f(z) = \sum_{n = 1}^\infty \lambda_f(n) q^n \in S_k^{new}(\Gamma_0(N))$ where $q = e^{2\pi i z}$,  be a newform 
of even weight $k$ and trivial nebentypus on $\Gamma_0(N)$ that does not have complex multiplication. 
Then $\{\lambda_f(n)\}$ is a strong Benford sequence. 
\end{corollary}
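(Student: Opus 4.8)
The plan is to apply the main criterion, Theorem~\ref{thm:Benfordcriterion}, which reduces strong Benford behavior of $\{\lambda_f(n)\}$ to showing that for every nonzero integer $\ell$ and every real $\alpha$, the multiplicative function $p \mapsto e^{2\pi i \ell \log_{10}|\lambda_f(p)|}$ is not ``close to'' $p^{i\alpha}$ in the precise sense quantified by Hal\'asz's theorem. In practice this means verifying that a certain sum over primes, measuring the correlation of $e^{2\pi i \ell \log_{10}|\lambda_f(p)|}$ against $p^{-i\alpha}$, diverges (so that the associated mean value tends to zero). The first thing I would record is the normalization: by Deligne's bound $|\lambda_f(p)| \le 2 p^{(k-1)/2}$, and writing $\lambda_f(p) = 2 p^{(k-1)/2}\cos\theta_p$ with $\theta_p \in [0,\pi]$ the Satake angle, so that $\log_{10}|\lambda_f(p)| = \frac{k-1}{2}\log_{10}p + \log_{10}(2|\cos\theta_p|)$.

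The substantive input is the \emph{equidistribution of the Satake angles}. Because $f$ has no complex multiplication, the Sato--Tate conjecture (now a theorem, via Barnet-Lamb--Geraghty--Harris--Taylor and the automorphy of symmetric powers) tells us that the angles $\theta_p$ are equidistributed in $[0,\pi]$ with respect to the Sato--Tate measure $\frac{2}{\pi}\sin^2\theta\, d\theta$. I would use this, in the quantitative form supplied by known effective prime number theorems for the symmetric power $L$-functions $L(s,\mathrm{Sym}^m f)$, to evaluate the relevant prime sums. The point is that the $\frac{k-1}{2}\log_{10}p$ term combines with $p^{-i\alpha}$ into a factor $p^{-i(\alpha - \pi\ell(k-1)/\log 10)}$, i.e.\ it only shifts the value of $\alpha$ one tests against, while the genuinely oscillatory factor is $e^{2\pi i \ell \log_{10}(2|\cos\theta_p|)}$, a fixed nonconstant function of $\theta_p$. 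Since this function is not almost everywhere constant and $\ell \ne 0$, its average against the Sato--Tate measure has modulus strictly less than one; combined with equidistribution and the prime number theorem one concludes that the correlation sum diverges, which is exactly the hypothesis the criterion needs to rule out.

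The main obstacle I anticipate is handling the primes at which $\lambda_f(p) = 0$ (equivalently $\theta_p = \pi/2$), where $\log_{10}|\lambda_f(p)|$ is undefined and the logarithm has a singularity near such points. One must show these primes are sparse enough to be negligible and that the logarithmic singularity of $\log_{10}(2|\cos\theta_p|)$ is integrable against the Sato--Tate measure, so that the averaged quantities are well defined and the contribution of angles near $\pi/2$ does not spoil the strict inequality for the mean. (For non-CM forms the density of primes with $\lambda_f(p)=0$ is zero, so removing these primes only affects a density-zero subsequence, consistent with the convention in Definition for non-negative sequences once we separate signs.) A secondary technical point is to verify that the values $\frac{k-1}{2}\log_{10}p$ never conspire with an arithmetic-progression or multiplicative obstruction that would make $\{\lambda_f(n)\}$ fail Benford for a different reason; but since $k$ is fixed this term is benign and merely reparametrizes $\alpha$, so no such obstruction arises and the criterion yields the claim.
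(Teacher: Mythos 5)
Your proposal follows essentially the same route as the paper: write $\lambda_f(p)=2p^{(k-1)/2}\cos\theta_p$, absorb the $p^{(k-1)/2}$ factor into a shift of $\alpha$, and use Sato--Tate equidistribution to show that the average of the unimodular, nonconstant function $(2|y|)^{i\beta}$ against the Sato--Tate measure has real part (hence modulus) strictly less than $1$, so the relevant distance $\mathbb D(g_\ell,n^{i\alpha};x)^2\gg \ln\ln x$ diverges. The only points to make explicit are the hypothesis $N/\mathcal T_N\ll 1$ of Theorem~\ref{thm:Benfordcriterion}, which the paper obtains from Serre's bound $\#\{p\le x:\lambda_f(p)=0\}\ll x/\log^{1+\delta}x$ and the consequent $\#\{n\le x:\lambda_f(n)\ne 0\}\gg x$, and that no effective form of Sato--Tate is required --- qualitative equidistribution plus partial summation, together with $\sum_{p\le x}p^{-1-i\gamma}=O_\gamma(1)$ for $\gamma\ne 0$, suffices.
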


We remark that previously Anderson, Rolen and Stoehr \cite{ARS} proved that the non-zero coefficients of a special family of weakly holomorphic modular forms and certain partition functions are Benford.  We refer the reader there for more precise statements.

Our final result shows that the functions violating Benford's law possess some algebraic structure.

\begin{corollary}\label{cor:group} The collection of multiplicative functions $f\colon \mathbb{N} \to \R - \{0\}$ for which $\{f(n)\}$ is not a strong Benford sequence forms a group under pointwise multiplication.
\end{corollary}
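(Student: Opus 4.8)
The plan is to deduce everything from the criterion of Theorem~\ref{thm:Benfordcriterion}, which, for a nonvanishing real multiplicative $f$, reformulates the failure of the strong Benford property as a \emph{pretentiousness} statement about the unit-modulus multiplicative functions $f_\ell(n):=e^{2\pi i \ell \log_{10}|f(n)|}$. Concretely, I will use the criterion in the form: $\{f(n)\}$ is \emph{not} strong Benford if and only if there exist a nonzero integer $\ell$ and a real number $\alpha$ for which the Granville--Soundararajan distance $\mathbb{D}(f_\ell, n^{i\alpha})$ is finite, where $\mathbb{D}(g,h)^2=\sum_p \frac{1-\operatorname{Re}(g(p)\overline{h(p)})}{p}$. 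Since the ambient collection of multiplicative $f\colon \mathbb{N}\to\R-\{0\}$ is visibly an abelian group under pointwise multiplication (a product of multiplicative functions is multiplicative and stays nonvanishing), it suffices to check that the set $G$ of non-Benford functions is a subgroup: that the constant function $1$ lies in $G$, and that $G$ is closed under inverses and products.

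The three identities that make the argument run are the elementary observations that, because $\log_{10}|f|$ is additive on coprime arguments,
\[
f_\ell=(f_1)^{\ell}, \qquad (1/f)_\ell=\overline{f_\ell}, \qquad (fg)_\ell=f_\ell\, g_\ell,
\]
valid for every integer $\ell$. The only analytic inputs I need are the standard properties of $\mathbb{D}$ on unit-modulus multiplicative functions: the triangle inequality $\mathbb{D}(a,c)\le\mathbb{D}(a,b)+\mathbb{D}(b,c)$, its multiplicative companion $\mathbb{D}(a_1a_2,b_1b_2)\le\mathbb{D}(a_1,b_1)+\mathbb{D}(a_2,b_2)$, conjugation invariance $\mathbb{D}(\overline a,\overline b)=\mathbb{D}(a,b)$, and the consequent power bound $\mathbb{D}(a^m,b^m)\le|m|\,\mathbb{D}(a,b)$ for $m\in\Z$. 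All of these apply here because $f_\ell$ and $n\mapsto n^{i\alpha}$ have modulus $1$ at every prime.

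With these in hand the verifications are short. For the constant function $f\equiv 1$ we have $f_\ell\equiv 1$, so $\mathbb{D}(f_\ell,n^{i\alpha})=0$ at $\alpha=0$ and hence $1\in G$ (directly, its leading digit is constantly $1$). If $f\in G$ is witnessed by $(\ell,\alpha)$ with $\mathbb{D}(f_\ell,n^{i\alpha})<\infty$, then since $(1/f)_\ell=\overline{f_\ell}$ and $\mathbb{D}(\overline{f_\ell},n^{-i\alpha})=\mathbb{D}(f_\ell,n^{i\alpha})$, the pair $(\ell,-\alpha)$ witnesses $1/f\in G$. The crux is closure: if $f,g\in G$ are witnessed by $(\ell_1,\alpha_1)$ and $(\ell_2,\alpha_2)$, the two levels $\ell_1,\ell_2$ need not agree, so I first pass to the common level $\ell:=\ell_1\ell_2\neq 0$. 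Applying the power bound to $a=f_{\ell_1}$, $b=n^{i\alpha_1}$ with exponent $\ell_2$, and using $f_{\ell}=(f_{\ell_1})^{\ell_2}$, gives $\mathbb{D}(f_{\ell},n^{i\alpha_1\ell_2})\le|\ell_2|\,\mathbb{D}(f_{\ell_1},n^{i\alpha_1})<\infty$, and symmetrically $\mathbb{D}(g_{\ell},n^{i\alpha_2\ell_1})<\infty$. Since $(fg)_{\ell}=f_{\ell}g_{\ell}$, the multiplicative triangle inequality yields
\[
\mathbb{D}\big((fg)_{\ell},\,n^{i(\alpha_1\ell_2+\alpha_2\ell_1)}\big)\le\mathbb{D}(f_{\ell},n^{i\alpha_1\ell_2})+\mathbb{D}(g_{\ell},n^{i\alpha_2\ell_1})<\infty,
\]
so $(\ell,\alpha_1\ell_2+\alpha_2\ell_1)$ witnesses $fg\in G$, completing the subgroup check.

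I expect the only genuine obstacle to be the reconciliation of the two possibly distinct frequencies $\ell_1,\ell_2$ attached to $f$ and $g$: the Benford criterion is intrinsically a statement ``at some level $\ell$,'' and there is no reason the witnessing levels for $f$ and $g$ coincide. The homogeneity $f_\ell=(f_1)^{\ell}$ together with the power bound $\mathbb{D}(a^m,b^m)\le|m|\,\mathbb{D}(a,b)$ is precisely what lets me promote both witnesses to the shared level $\ell_1\ell_2$ without losing finiteness of the distance, after which the multiplicative triangle inequality finishes the job. Everything else is a routine verification of the subgroup axioms, the associativity being inherited from pointwise multiplication of functions.
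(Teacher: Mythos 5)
Your proof is correct and follows essentially the same route as the paper: inverses are handled by the conjugation symmetry $\mathbb{D}(\overline{f_\ell}, n^{-i\alpha}) = \mathbb{D}(f_\ell, n^{i\alpha})$, and closure under products by promoting the two witnesses to the common level $\ell_1\ell_2$ via $(fg)_{\ell_1\ell_2} = f_{\ell_1}^{\ell_2} g_{\ell_2}^{\ell_1}$ and the triangle inequality, exactly as in the paper. The only (harmless) addition is your explicit check that the constant function $1$ lies in the set, which the paper leaves implicit.
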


As one illustration of Corollary \ref{cor:group}, we observe that since $\{d(n)\}$ is a strong Benford sequence while $\{\phi(n)\}$ is not, $\{d(n)\phi(n)\}$ is a strong Benford sequence.

\section{The Main Theorem}  \label{sec:MainThm}

Our criterion for real multiplicative functions to satisfy Benford's law is stated in terms of the ``distance'' between two multiplicative functions. We start by defining this distance.

\begin{defi} \label{def:distance}
Let $f$ and $g$ be multiplicative functions taking values in the unit disc $\{z\in \C: |z|\le 1\}$. The distance up to $x$ between $f$ and $g$ is defined to be
$$
\mathbb D (f, g; x) := \left(\sum_{p \leq x} \frac{1 - \mathrm{Re}(f(p) \overline{g(p)} )}{p}\right)^{1/2}.
$$
\end{defi}

This notion of distance comes from the work of Granville and Soundararajan \cite{GS1}. It is known (see \cite[p.\ 364]{GS1}) that the distance function satisfies the triangle inequality, i.e. for any multiplicative function $f, f', g$, and $g'$,
\begin{equation*} 
  \mathbb D(ff', gg'; x) \ \leq \ \mathbb D(f, g; x) + \mathbb D(f', g'; x). 
\end{equation*}

If $\mathbb D(f, g; \infty) < \infty$, then we say that $f$ {\it pretends} to be $g$. 
Let $h\colon \mathbb N \rightarrow \mathbb R - \{0\}$ be a multiplicative function. It is obvious that $f_\ell(n) := e^{2\pi i \ell \log_{10} |h(n)|}$ is also multiplicative.


We will start with the criterion for nonzero multiplicative functions. The criterion for $\{h(n) \}$ to be a strong Benford sequence is that $f_\ell$ does not pretend to be $n^{i\alpha}$ for all $\alpha \in \mathbb R$ and all nonzero integers $\ell$. More precisely, we have the following. 

\begin{thm} \label{thm:Benfordcri2}   Let $h\colon \mathbb N \rightarrow \mathbb R -\{0\}$ be a multiplicative function. Let $f_\ell(n) = e^{2\pi i \ell \log_{10} |h(n)|}$.  $\{h(n)\}$ is a  strong Benford sequence if and only if  $\mathbb D (f_\ell, n^{i\alpha}; \infty) = \infty$ for all $\alpha \in \mathbb R$ and all $\ell \ne 0$.

\end{thm}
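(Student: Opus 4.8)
The plan is to convert the strong Benford property into an equidistribution statement and then read off the answer from Hal\'asz's theorem. Recall the classical equivalence that a sequence $\{a_n\}$ of positive reals is strong Benford if and only if $\{\log_{10} a_n\}$ is equidistributed modulo $1$; since the leading digits of $h(n)$ depend only on $|h(n)|$, applying this with $a_n = |h(n)|$ reduces the theorem to deciding when $\{\log_{10}|h(n)|\}$ is equidistributed mod $1$. By Weyl's criterion this holds if and only if
\[
\frac{1}{N}\sum_{n\le N} e^{2\pi i \ell \log_{10}|h(n)|} = \frac{1}{N}\sum_{n\le N} f_\ell(n) \longrightarrow 0 \quad (N\to\infty)
\]
for every nonzero integer $\ell$. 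As each $f_\ell$ is multiplicative and unimodular, the theorem becomes the assertion that this mean value tends to $0$ exactly when $\mathbb{D}(f_\ell, n^{i\alpha}; \infty) = \infty$ for all $\alpha\in\mathbb{R}$.

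For the implication ``all distances infinite $\Rightarrow$ Benford,'' I would use the quantitative upper bound in Hal\'asz's theorem: for unimodular multiplicative $g$ and parameters $x,T$,
\[
\frac{1}{x}\left|\sum_{n\le x} g(n)\right| \ll (1+M)e^{-M} + \frac{1}{\sqrt{T}}, \qquad M = M(x,T) := \min_{|\alpha|\le T}\mathbb{D}(g,n^{i\alpha};x)^2 .
\]
Fixing $\ell$ and $T$, it suffices to show $M(x,T)\to\infty$ as $x\to\infty$, after which letting $T\to\infty$ forces the mean value to $0$. I would prove this by contradiction: if some $\alpha_j\in[-T,T]$ kept $\mathbb{D}(f_\ell,n^{i\alpha_j};x_j)$ bounded along a sequence $x_j\to\infty$, pass to a convergent subsequence $\alpha_j\to\alpha_*$; then for each fixed $y$, the monotonicity of $\mathbb{D}(\,\cdot\,;x)$ in $x$, the triangle inequality, and the fact that $\mathbb{D}(1,n^{i(\alpha_j-\alpha_*)};y)\to 0$ (a finite sum, continuous in the exponent) combine to bound $\mathbb{D}(f_\ell,n^{i\alpha_*};y)$ uniformly in $y$, i.e.\ $\mathbb{D}(f_\ell,n^{i\alpha_*};\infty)<\infty$, contradicting the hypothesis. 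The crucial point is the order of limits: fixing $y$ before sending $j\to\infty$ is what legitimizes the compactness argument despite $\mathbb{D}(1,n^{it};x)$ blowing up as $x\to\infty$.

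For the converse, suppose $\mathbb{D}(f_\ell,n^{i\alpha};\infty)<\infty$ for some $\ell\ne 0$ and some $\alpha$. Here I would invoke the complementary half of Hal\'asz's theorem, the asymptotic for pretentious functions,
\[
\frac{1}{x}\sum_{n\le x} f_\ell(n) = (1+o(1))\,\frac{x^{i\alpha}}{1+i\alpha}\prod_{p}\left(1-\frac1p\right)\sum_{k\ge 0}\frac{f_\ell(p^k)}{p^{k(1+i\alpha)}},
\]
where the Euler product converges because $\sum_p (1-\mathrm{Re}(f_\ell(p)p^{-i\alpha}))/p<\infty$. Provided this product is nonzero, $\frac1x|\sum_{n\le x} f_\ell(n)|$ tends to a positive limit, violating Weyl's criterion for this $\ell$ and showing $\{h(n)\}$ is not Benford.

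The main obstacle is exactly the proviso in the last step: the Euler product could vanish. Since $|\sum_{k\ge 0} f_\ell(p^k)p^{-k(1+i\alpha)}| \ge 1 - \tfrac{1}{p-1}$, every factor with $p\ge 3$ is nonzero, so the only possible vanishing is at $p=2$, and it occurs precisely when $f_\ell(2^k) = -2^{ki\alpha}$ for all $k\ge 1$. I would dispose of this degenerate case by replacing $\ell$ with $2\ell$: since $f_{2\ell}=f_\ell^{\,2}$ still pretends to $n^{2i\alpha}$ (by the triangle inequality for $\mathbb{D}$), while $f_{2\ell}(2^k)2^{-2ki\alpha} = (-1)^2 = 1$ makes its factor at $p=2$ equal to $1$, the product attached to $f_{2\ell}$ is nonzero; hence $\frac1x\sum_{n\le x} f_{2\ell}(n)\not\to 0$ and Weyl's criterion already fails for the nonzero integer $2\ell$. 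Pinning down this asymptotic with a usable error term, and confirming that no factor other than the one at $p=2$ can degenerate, is the delicate part of the argument; the equidistribution reduction and the compactness step are comparatively routine.
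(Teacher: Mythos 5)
Your proposal is correct and follows essentially the same route as the paper: reduce to equidistribution of $\log_{10}|h(n)|$ via Diaconis's lemma and Weyl's criterion, then apply Hal\'asz's theorem to the unimodular multiplicative functions $f_\ell$, disposing of the degenerate case $f_\ell(2^k) = -2^{ik\alpha}$ by passing to $f_{2\ell} = f_\ell^2$ and using the triangle inequality for $\mathbb{D}$ --- exactly the paper's argument. The only difference is that you re-derive the mean-value-zero criterion (including the exceptional condition at $p=2$) from the quantitative Hal\'asz bound and the pretentious Euler-product asymptotic, whereas the paper cites a version of Hal\'asz's theorem (Elliott, Theorem 6.3) that already packages this criterion.
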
 

For multiplicative functions which are possibly zero for some $n$, we need an additional condition on the frequency of nonzero terms. 
\begin{thm} \label{thm:Benfordcriterion}  Let $h\colon \mathbb N \rightarrow \mathbb R$ be a multiplicative function. Define 
$$\mathcal T_{N} = |\{ n \leq N \ | \  h(n) \neq 0\}|.$$ 
Suppose $\frac{N}{\mathcal T_N} \ll 1.$  Let $f_\ell(n)$ be defined as in Theorem \ref{thm:Benfordcri2}, and $g_\ell(n) = f_\ell(n)$ if $h(n) \neq 0$ and  is 0 otherwise. Then $\{h(n)\}$ is a strong Benford sequence if and only if  $\mathbb D (g_\ell, n^{i\alpha}; \infty) = \infty$ for all $\alpha \in \mathbb R$ and all $\ell \ne 0$.  
\end{thm}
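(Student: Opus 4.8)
The plan is to derive Theorem \ref{thm:Benfordcriterion} from Theorem \ref{thm:Benfordcri2}, treating the zero terms separately and absorbing their effect through the hypothesis $N/\mathcal{T}_N \ll 1$. By the definition of strong Benford for non-negative sequences (which discards the zero terms), together with the convention that the leading digit of a real number is that of its absolute value, $\{h(n)\}$ is strong Benford exactly when the subsequence $\{\log_{10}|h(n)| : h(n)\neq 0\}$ is equidistributed modulo $1$. I would then apply Weyl's criterion exactly as in the proof of Theorem \ref{thm:Benfordcri2}: since $\mathcal{T}_N$ counts precisely the nonzero terms with $n \leq N$, equidistribution of this subsequence is equivalent to
\[
\frac{1}{\mathcal{T}_N}\sum_{\substack{n\leq N\\ h(n)\neq 0}} e^{2\pi i \ell \log_{10}|h(n)|} = \frac{1}{\mathcal{T}_N}\sum_{n\leq N} g_\ell(n) \longrightarrow 0 \qquad \text{for every } \ell \neq 0,
\]
where the first equality is just the fact that $g_\ell$ agrees with $f_\ell$ on the nonzero terms and vanishes elsewhere.

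The next step is to observe that $g_\ell$ is itself a multiplicative function taking values in the closed unit disc. Indeed, multiplicativity of $h$ forces the indicator $\mathbf{1}_{h(n)\neq 0}$ to be multiplicative, and $f_\ell$ is multiplicative because $|h|$ is; hence $g_\ell = f_\ell\cdot\mathbf{1}_{h\neq 0}$ is multiplicative with $|g_\ell|\leq 1$. This is exactly the setting required by Hal\'asz's theorem, so the argument used for $f_\ell$ in Theorem \ref{thm:Benfordcri2} applies verbatim to $g_\ell$ and gives
\[
\frac{1}{N}\sum_{n\leq N} g_\ell(n)\longrightarrow 0 \quad\Longleftrightarrow\quad \mathbb{D}(g_\ell,\, n^{i\alpha};\infty)=\infty \text{ for all } \alpha\in\mathbb{R}.
\]

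It then remains only to reconcile the normalizations $1/\mathcal{T}_N$ and $1/N$, and this is precisely where the hypothesis enters. Since trivially $\mathcal{T}_N\leq N$ and, by assumption, $N/\mathcal{T}_N \ll 1$, the ratio $\mathcal{T}_N/N$ is bounded above and below by positive constants, i.e.\ $\mathcal{T}_N\asymp N$. Writing
\[
\frac{1}{N}\sum_{n\leq N} g_\ell(n) = \frac{\mathcal{T}_N}{N}\cdot\frac{1}{\mathcal{T}_N}\sum_{n\leq N} g_\ell(n)
\]
and noting that the prefactor stays bounded away from $0$ and $\infty$, I conclude that the $1/\mathcal{T}_N$-normalized sums tend to $0$ if and only if the $1/N$-normalized sums do. Chaining the resulting equivalences yields the theorem.

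Since the Weyl and Hal\'asz inputs are taken directly from Theorem \ref{thm:Benfordcri2}, the genuinely new content is the bookkeeping for the zeros: checking that $g_\ell$ inherits multiplicativity and the unit-disc bound, and that the subsequence Weyl sums coincide with the $n\leq N$ sums of $g_\ell$. I expect the only delicate analytic point to be the converse half of the Hal\'asz equivalence — that if $g_\ell$ pretends to be some $n^{i\alpha}$ then its mean value fails to tend to $0$ — but this is identical to the corresponding step for $f_\ell$ and needs no new idea here. The main obstacle is therefore simply confirming that the density hypothesis $N/\mathcal{T}_N\ll 1$ is strong enough to render the two normalizations interchangeable, which the computation above verifies.
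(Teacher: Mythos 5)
Your setup — passing to the nonzero subsequence, applying Weyl's criterion, identifying the subsequence Weyl sums with $\sum_{n\le N} g_\ell(n)$, checking that $g_\ell$ is multiplicative and unimodular where nonzero, and using $N/\mathcal{T}_N \ll 1$ to interchange the normalizations $1/\mathcal{T}_N$ and $1/N$ — coincides with the paper's argument. But there is a genuine gap in the ``only if'' direction. You assert the biconditional
$$\frac 1N\sum_{n\le N} g_\ell(n) \to 0 \iff \mathbb D(g_\ell, n^{i\alpha};\infty)=\infty \text{ for all } \alpha,$$
claiming it follows from Hal\'asz ``verbatim as for $f_\ell$.'' Hal\'asz's theorem (as stated in the paper, Theorem \ref{thm:Halasz}) does not give this equivalence: when there is an $\alpha$ with $\mathbb D(g_\ell,n^{i\alpha};\infty)<\infty$, the function can \emph{still} have mean value zero, in the exceptional case $g_\ell(2^k)=-2^{ik\alpha}$ for all $k\ge 1$. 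So from ``every $g_\ell$ has mean value $0$'' you cannot directly conclude that every $g_\ell$ is at infinite distance from every $n^{i\alpha}$. Moreover, there is no ``corresponding step for $f_\ell$'' to borrow: the paper proves Theorem \ref{thm:Benfordcriterion} first and deduces Theorem \ref{thm:Benfordcri2} from it as the special case $\mathcal T_N = N$, so your plan of quoting the $f_\ell$ argument is circular relative to the paper's structure and, in any case, leaves the exceptional case unaddressed.

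The missing idea is the paper's squaring trick. If some $\ell\ne 0$ and $\alpha$ satisfy $\mathbb D(g_\ell, n^{i\alpha};\infty)<\infty$ together with $g_\ell(2^k)=-2^{ik\alpha}$, one considers $g_{2\ell}=g_\ell^2$: the triangle inequality for the distance gives $\mathbb D(g_{2\ell}, n^{2i\alpha};\infty)\le 2\,\mathbb D(g_\ell,n^{i\alpha};\infty)<\infty$, while $g_{2\ell}(2)=(g_\ell(2))^2 = 2^{2i\alpha}\ne -2^{2i\alpha}$, so the exceptional condition fails for $g_{2\ell}$ and Hal\'asz forces $g_{2\ell}$ to have \emph{nonzero} mean value — contradicting strong Benfordness, which requires mean value zero for every nonzero frequency. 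Without this (or an equivalent device), your proof of the ``only if'' direction is incomplete.
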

\begin{rem}
$g_\ell(n)$ is also multiplicative. For $(m, n) = 1$, if both $h(m)$ and $h(n)$ are not zero, we have $$g_\ell(mn) = f_\ell(mn) = f_\ell(m) f_\ell(n) = g_\ell(m) g_\ell(n).$$
Otherwise
$$ g_\ell(m) g_\ell(n) = 0 = g_\ell(mn).$$
\end{rem}

\begin{rem} Theorem \ref{thm:Benfordcri2} is a corollary of Theorem \ref{thm:Benfordcriterion}, where $\mathcal T_N = N$ and $g_\ell(n) = f_\ell(n).$ We state Theorem \ref{thm:Benfordcri2} for clarity. 

\end{rem}

The notation of being strong Benford is closely connected to the notion of being uniformly distributed modulo $1$, which we define formally below.

\begin{defi} A sequence $\{a_k\}$ is uniformly distributed modulo 1 if and only if the fractional parts of all numbers in the sequence distribute uniformly on the interval $[0,1]$, i.e.,
$$ \lim_{n \rightarrow \infty} \frac{|\{ k \leq n: \ a_k \mod 1 \in (a, b) \}|}{n} = b - a,  $$
where $(a, b) \subset [0, 1].$
\end{defi}

Diaconis \cite{Persi} showed that being strong Benford is equivalent to being uniformly distributed mod $1$.  To be precise, we have the following result.
\begin{lem} \label{lem:Benford} A nonzero sequence $\{a_k\}$ is a strong Benford sequence if and only if the sequence $\{\log_{10} |a_k| \}$ is uniformly distributed modulo 1. 
\end{lem}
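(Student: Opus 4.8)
The plan is to pass through the mantissa and translate the digit-string condition into membership in an explicit subinterval of $[0,1)$, after which the equivalence reduces to approximating arbitrary intervals by these special ones. First I would reduce to a positive sequence: since both the leading digits and $\log_{10}|a_k|$ depend only on $|a_k|$, I may replace $a_k$ by $|a_k|$ and assume $a_k > 0$. Writing $a_k = 10^{M_k} r_k$ with $M_k \in \Z$ and mantissa $r_k \in [1,10)$, we have $\log_{10} a_k = M_k + \log_{10} r_k$, so $\{\log_{10} a_k\} = \log_{10} r_k$. The key dictionary is that, for a string $S = d_0 d_1 \cdots d_{K-1}$ with $d_0 \neq 0$ and $r_0 = \sum_{j=0}^{K-1} d_j 10^{-j}$, the first $K$ digits of $a_k$ equal $S$ if and only if $r_k \in [r_0, r_0 + 10^{1-K})$, which — applying the increasing bijection $\log_{10}\colon [1,10) \to [0,1)$ — is equivalent to $\{\log_{10} a_k\} \in I_S := [\log_{10} r_0, \log_{10}(r_0 + 10^{1-K}))$. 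Moreover the length of $I_S$ equals $\log_{10}(r_0 + 10^{1-K}) - \log_{10} r_0 = \log_{10}\pr{1 + \frac 1S}$, which is exactly the target Benford probability.

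The direction ($\Leftarrow$) is then immediate: if $\{\log_{10} a_k\}$ is uniformly distributed modulo $1$, the proportion of $k \leq N$ landing in $I_S$ converges to the length of $I_S$, which by the dictionary is precisely the strong Benford probability for $S$; hence $\{a_k\}$ is strong Benford. For the direction ($\Rightarrow$) the main point is an approximation argument. For each fixed $K$, as $S$ runs over all admissible length-$K$ strings the numbers $r_0$ run through $\{m\cdot 10^{1-K} : 10^{K-1} \leq m < 10^{K}\}$, so the half-open intervals $[r_0, r_0 + 10^{1-K})$ tile $[1,10)$; applying $\log_{10}$, the intervals $I_S$ tile $[0,1)$. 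Their mesh is at most $\log_{10}(1 + 10^{1-K}) \to 0$ as $K \to \infty$, since $r_0 \geq 1$. Strong Benford gives that the limiting proportion of the sequence in each $I_S$ equals its length.

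To establish uniform distribution for an arbitrary $(a,b) \subset [0,1)$ and $\varepsilon > 0$, I would take $K$ large enough that the mesh is below $\varepsilon$ and sandwich $(a,b)$ between the union of those $I_S$ contained in $(a,b)$ and the union of those $I_S$ meeting $(a,b)$. Each of these is a finite disjoint union of intervals whose sequence-proportion converges to its total length, so taking $\liminf$ and $\limsup$ over $N$ of the proportion in $(a,b)$ traps both quantities between $(b-a) \pm 2\varepsilon$; letting $\varepsilon \to 0$ yields the limit $b - a$. The heart of the argument, and the only step requiring genuine care, is this last approximation — verifying that the digit intervals $I_S$ truly partition $[0,1)$ with vanishing mesh and controlling the two boundary intervals of $(a,b)$ in the squeeze. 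Everything else is the change of variables through $\log_{10} r_k$.
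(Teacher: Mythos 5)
Your argument is correct and complete in outline. Note, however, that the paper does not actually prove Lemma \ref{lem:Benford}: it is stated as a known result of Diaconis and justified only by the citation \cite{Persi}, so there is no in-paper proof to compare against. Your write-up is the standard self-contained argument, and all the essential points are in place: the dictionary identifying ``first $K$ digits equal $S$'' with $\{\log_{10}|a_k|\}\in I_S=[\log_{10}r_0,\log_{10}(r_0+10^{1-K}))$, the computation that $|I_S|=\log_{10}(1+1/S)$ matches the Benford frequency, the observation that for fixed $K$ the intervals $I_S$ partition $[0,1)$ with mesh $\log_{10}(1+10^{1-K})\to 0$, and the two-sided squeeze of an arbitrary interval $(a,b)$ by finite unions of the $I_S$. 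The only point worth making explicit (it is routine but should be said) is in the ($\Leftarrow$) direction: the paper's definition of uniform distribution is stated for open intervals, while $I_S$ is half-open, so you should note that for a uniformly distributed sequence the asymptotic proportion in an interval is insensitive to whether endpoints are included (sandwich $[c,d)$ between $(c-\delta,d)$ and $(c,d)$ and let $\delta\to 0$). With that remark added, the proof is a valid and fully elementary substitute for the citation to Diaconis.
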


A classic result of Weyl gives a necessary and sufficient condition for a sequence to be uniformly distributed.

\begin{thm}[Weyl's criterion] \label{thm:Weyl} The sequence $\{ a_k \}$ is uniformly distributed modulo 1 if and only if 
$$ \lim_{N \rightarrow \infty} \frac 1N\sum_{k = 1}^N e^{2\pi i \ell a_k} = 0$$
for all integers $\ell \neq 0.$
\end{thm}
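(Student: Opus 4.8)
The plan is to prove both implications by passing from the indicator functions of subintervals, which define uniform distribution, to the larger class of continuous $1$-periodic functions, for which the exponential-sum condition gives direct control. Throughout, a function of $a_k \bmod 1$ is really a $1$-periodic function evaluated at $a_k$, and the target integral of $e^{2\pi i \ell x}$ over $[0,1]$ is $0$ for $\ell \neq 0$ and $1$ for $\ell = 0$.

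First I would record the key reformulation: the sequence $\{a_k\}$ is uniformly distributed modulo $1$ if and only if
$$\lim_{N\to\infty}\frac1N\sum_{k=1}^N f(a_k) = \int_0^1 f(x)\,dx$$
for every continuous $1$-periodic function $f$. One direction of this reformulation is a sandwiching argument: the definition of uniform distribution is exactly this statement when $f$ is the indicator of a subinterval, and an arbitrary continuous $f$ can be approximated uniformly from above and below by step functions (linear combinations of such indicators), so its averages are squeezed to $\int_0^1 f$. The reverse direction is the opposite sandwiching: given an interval $(a,b)$, choose continuous $1$-periodic $f^{\pm}$ with $f^{-}\le \mathbf 1_{(a,b)}\le f^{+}$ and $\int_0^1(f^{+}-f^{-})<\varepsilon$; applying the reformulated hypothesis to $f^{\pm}$ pins both the $\limsup$ and $\liminf$ of $\frac1N|\{k\le N:\ a_k\bmod 1\in(a,b)\}|$ to within $\varepsilon$ of $b-a$, and letting $\varepsilon\to 0$ yields uniform distribution.

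With this reformulation in hand, the theorem reduces to showing that the exponential-sum condition is equivalent to $\frac1N\sum_{k\le N} f(a_k)\to\int_0^1 f$ for all continuous $1$-periodic $f$. One direction is immediate: taking $f(x)=e^{2\pi i\ell x}$ recovers the hypothesis exactly. For the converse, note that for $\ell = 0$ the average equals $1=\int_0^1 1\,dx$ trivially, so by linearity the convergence $\frac1N\sum_{k\le N}P(a_k)\to\int_0^1 P$ holds for every trigonometric polynomial $P(x)=\sum_{|\ell|\le L}c_\ell e^{2\pi i\ell x}$. I would then invoke the Weierstrass approximation theorem for periodic functions: given continuous $1$-periodic $f$ and $\varepsilon>0$, pick a trigonometric polynomial $P$ with $\sup_x|f(x)-P(x)|<\varepsilon$. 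Routing $\frac1N\sum f(a_k)-\int_0^1 f$ through $P$ and using the uniform bound $|f-P|<\varepsilon$ on both the average and the integral gives $\limsup_N |\frac1N\sum_{k\le N}f(a_k)-\int_0^1 f|\le 2\varepsilon$, and $\varepsilon\to 0$ closes the argument.

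I expect the main technical obstacle to lie in the two sandwiching steps rather than in the algebra: constructing the continuous approximants $f^{\pm}$ to an interval indicator with arbitrarily small $L^1$ gap, and controlling the endpoints (where the fractional parts $a_k\bmod 1$ could accumulate) so that the squeeze on the counting average is genuinely valid in the limit. The Weierstrass density of trigonometric polynomials is the other essential ingredient, but it enters as a black-box citation, so the genuine care is in the measure-theoretic approximation of interval indicators by continuous functions.
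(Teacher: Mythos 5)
Your proof is correct, but note that the paper itself offers no proof of this statement: Weyl's criterion is quoted as a classical black box (it is used only to pass from strong Benford to mean values of the $g_\ell$), so there is no internal argument to compare against. What you have written is the standard textbook proof (as in Kuipers--Niederreiter): the sandwich between interval indicators and continuous $1$-periodic functions in one direction, and linearity over trigonometric polynomials plus Weierstrass/Fej\'er approximation in the other, and all the steps check out. The two technical points you flag are indeed the only places requiring care, and both are routine: the trapezoidal approximants $f^{\pm}$ to $\mathbf{1}_{(a,b)}$ are explicit piecewise-linear constructions (with a minor wrap-around adjustment if $a=0$ or $b=1$, to keep $f^{\pm}$ genuinely $1$-periodic), and the endpoint-accumulation worry is harmless because the sandwich bounds $\liminf$ and $\limsup$ of the counting averages directly by $\int_0^1 f^{\mp}$, without ever needing the indicator averages themselves to converge a priori. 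One pedantic remark: in the converse direction you should say explicitly that the $\ell=0$ coefficient contributes $c_0 = \int_0^1 P$, which is exactly why the trigonometric-polynomial averages converge to the integral rather than to zero; your parenthetical ``trivially'' covers this, but it is the hinge of the linearity step.
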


Finally, Hal\'{a}sz's Theorem enables us to understand the averages of multiplicative functions by comparing them to $n^{i\alpha}$. 



\begin{thm}[Hal\'asz's Theorem]\label{thm:Halasz} Let $f$ be a multiplicative function with $|f(n)|\le 1$ for all integers $n$. If $\mathbb{D}(f,n^{i\alpha};\infty) = \infty$ for all $\alpha \in \R$, then $f$ has mean value zero, in the sense that $$ \lim_{x \rightarrow \infty} \frac 1x\sum_{n \leq x} f(n) = 0.$$
Otherwise there is a unique $\alpha \in \R$ with $\mathbb{D}(f,n^{i\alpha};\infty) < \infty$. In that case, $f$ has mean value $0$ if and only if 
\begin{equation}\label{eq:twocondition} f(2^k) = -2^{ik\alpha} \quad\text{for every positive integer $k$}.\end{equation}
\end{thm}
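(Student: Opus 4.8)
The plan is to deduce the theorem from the quantitative form of Hal\'asz's inequality, which bounds the mean value of $f$ in terms of how far $f$ stays from the functions $n^{it}$. Writing $F(s)=\sum_{n\ge1}f(n)n^{-s}$ for the Dirichlet series, absolutely convergent for $\mathrm{Re}(s)>1$, I would start from Perron's formula $\sum_{n\le x}f(n)=\frac{1}{2\pi i}\int_{(c)}F(s)\frac{x^s}{s}\,ds$ with $c=1+1/\log x$, and use the Euler product to obtain the line bound $\log|F(\sigma+it)|=\sum_{p}\frac{\mathrm{Re}(f(p)p^{-it})}{p^\sigma}+O(1)$. Comparing with $\sum_{p}p^{-\sigma}=\log\log x+O(1)$ then gives the pointwise estimate $|F(\sigma+it)|\ll \log x\cdot e^{-\mathbb D(f,n^{it};x)^2}$, so $F$ is large only where the distance is small. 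Combining this with a second-moment (Plancherel-type) estimate for $F$ along vertical segments — needed to avoid losing a whole factor of $\log x$ when integrating — should yield
\[ \frac1x\Big|\sum_{n\le x}f(n)\Big|\ll (1+M)e^{-M}+\frac{1}{\sqrt T}+o(1),\qquad M:=\min_{|t|\le T}\mathbb D(f,n^{it};x)^2, \]
valid for every fixed $T\ge1$. Extracting this clean inequality from the pointwise/mean-square interplay, without squandering logarithms, is the technical heart of the argument and the step I expect to be the main obstacle.

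Granting the inequality, the first assertion follows quickly. Suppose $\mathbb D(f,n^{i\alpha};\infty)=\infty$ for every $\alpha\in\R$, and fix $T>0$. I claim $M\to\infty$ as $x\to\infty$. If not, there are $x_j\to\infty$ and $t_j\in[-T,T]$ with $\mathbb D(f,n^{it_j};x_j)$ bounded; passing to a subsequence $t_j\to t^\ast$ and using the triangle inequality together with $\mathbb D(n^{it_j},n^{it^\ast};y)\to0$ for each fixed $y$, one finds $\mathbb D(f,n^{it^\ast};y)$ bounded uniformly in $y$, whence $\mathbb D(f,n^{it^\ast};\infty)<\infty$, a contradiction. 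Thus $(1+M)e^{-M}\to0$ for fixed $T$, so $\limsup_{x\to\infty}\frac1x|\sum_{n\le x}f(n)|\le 1/\sqrt T$; letting $T\to\infty$ gives mean value $0$.

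For the second assertion, first note that $\alpha$ is unique: finiteness of both $\mathbb D(f,n^{i\alpha};\infty)$ and $\mathbb D(f,n^{i\beta};\infty)$ would, by the triangle inequality, force $\mathbb D(1,n^{i(\beta-\alpha)};\infty)<\infty$, contradicting the divergence of $\sum_p(1-\cos((\beta-\alpha)\log p))/p$ for $\alpha\ne\beta$. Write $g(n)=f(n)n^{-i\alpha}$, a multiplicative function with $|g|\le1$ and $\mathbb D(g,1;\infty)<\infty$. Then $F(s)=G(s-i\alpha)$ with $G(w)=\sum_n g(n)n^{-w}=\zeta(w)H(w)$ and $H(w)=\prod_p(1-p^{-w})\sum_{k\ge0}g(p^k)p^{-kw}$; since $\mathbb D(g,1;\infty)<\infty$, the product for $H$ converges absolutely up to $\mathrm{Re}(w)=1$, so $H$ is continuous there with $H(1)=c:=\prod_p(1-\tfrac1p)\sum_{k\ge0}g(p^k)/p^k$. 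Hence $F$ has at worst a simple pole at $s=1+i\alpha$ with $H(1)=c$ controlling the residue, and the Perron/contour argument of the first paragraph, localized near $s=1+i\alpha$, yields $\frac1x\sum_{n\le x}f(n)=\frac{c}{1+i\alpha}x^{i\alpha}+o(1)$. Therefore $f$ has mean value $0$ if and only if $c=0$, i.e. some Euler factor of $c$ vanishes. For odd $p$ the factor $(1-\tfrac1p)\big(1+\sum_{k\ge1}g(p^k)/p^k\big)$ is nonzero because $\big|\sum_{k\ge1}g(p^k)/p^k\big|\le 1/(p-1)<1$; the factor at $2$ vanishes iff $\sum_{k\ge1}g(2^k)/2^k=-1$, which by the equality case in the triangle inequality forces $g(2^k)=-1$, equivalently $f(2^k)=-2^{ik\alpha}$, for every $k\ge1$. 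This is precisely condition \eqref{eq:twocondition}, completing the proof.
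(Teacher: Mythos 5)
First, a point of context: the paper does not prove this statement at all. It is quoted as a known result, with a citation to Theorem 6.3 of Elliott's monograph (and to p.~248 there for the uniqueness of $\alpha$). So you are attempting something the authors deliberately outsource. Your outline follows the standard route (a quantitative Hal\'asz inequality plus a Delange-type mean value theorem in the pretentious case), and the first half --- the compactness argument showing $M\to\infty$ for fixed $T$, and the uniqueness of $\alpha$ via the triangle inequality and the divergence of $\sum_p(1-\cos((\beta-\alpha)\log p))/p$ --- is correct as sketched, modulo the quantitative inequality that you yourself flag as unproven and which is the bulk of the work.

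The genuine gap is in the second half. From $\mathbb{D}(g,1;\infty)<\infty$ you conclude that the Euler product for $H$ ``converges absolutely up to $\mathrm{Re}(w)=1$.'' This is false in general: the hypothesis controls $\sum_p(1-\mathrm{Re}\,g(p))/p$, which bounds $\sum_p|1-g(p)|^2/p$ (since $|1-g(p)|^2\le 2(1-\mathrm{Re}\,g(p))$ when $|g(p)|\le1$) but not $\sum_p|1-g(p)|/p$; take for instance $g(p)=e^{i\epsilon_p}$ with $\sum_p\epsilon_p^2/p<\infty$ but $\sum_p|\epsilon_p|/p=\infty$. Consequently $H$ need not be an absolutely convergent product on the line, and --- more seriously --- $F$ has no analytic continuation past $\mathrm{Re}(s)=1$ in general, so the ``Perron/contour argument localized near $s=1+i\alpha$'' cannot be executed by shifting a contour past a pole. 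The assertion that $\frac1x\sum_{n\le x}f(n)=\frac{c}{1+i\alpha}x^{i\alpha}+o(1)$, with $c$ the (only conditionally convergent) Euler product, is precisely the Delange--Wirsing--Hal\'asz mean value theorem in the pretentious case; it is the substance of what is being proved and requires its own argument (a Tauberian step, or the treatment in Elliott's Theorem 6.3 that the paper cites). Your identification of when $c=0$ --- every odd Euler factor is nonzero since $\left|\sum_{k\ge1}g(p^k)p^{-k}\right|\le 1/(p-1)<1$, while the factor at $2$ vanishes iff $g(2^k)=-1$ for all $k$, i.e.\ iff \eqref{eq:twocondition} holds --- is correct, but it rests on an asymptotic you have not established.
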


Theorem \ref{thm:Halasz} is essentially contained in Theorem 6.3 on pp.\ 226--227 of Elliott's monograph \cite{Elliott}. The uniqueness of $\alpha$ with $\mathbb{D}(f,n^{i\alpha};\infty) < \infty$ is not explicit in that statement but is proved on p.\ 248 of that reference.

By Weyl's criterion, it suffices to apply Hal\'{a}sz's Theorem to $g_\ell (n)$. As mentioned earlier, Theorem \ref{thm:Benfordcri2} is a simple corollary of \ref{thm:Benfordcriterion}. Hence, we focus on Theorem \ref{thm:Benfordcriterion}.

\subsection{Proof of Theorem \ref{thm:Benfordcriterion}} 

Let $\{h^*(k)\}$ be the subsequence of $\{h(n)\}$ where all zero terms are removed. By Weyl's criterion, $\{h^*(k)\}$ is a strong Benford sequence if and only if 
$$ \lim_{\mathcal T_N \rightarrow \infty} \frac{1}{\mathcal T_N}\sum_{k = 1}^{\mathcal T_N} e^{2\pi i \ell \log_{10}|h^*(k)|} = 0$$
for all nonzero integers $\ell$. However, the function $e^{2\pi i \ell \log_{10}|h^*(k)|}$ is not necessarily multiplicative so we cannot apply Hal\'asz's Theorem. Therefore we add back some zero terms by constructing the function $g_\ell$ defined in Theorem \ref{thm:Benfordcriterion} and write
$$   \frac{1}{\mathcal T_N}\sum_{k = 1}^{\mathcal T_N} e^{2\pi i \ell \log_{10}|h^*(k)|} = \frac{N}{\mathcal T_N}  \frac{1}{N}\sum_{n = 1}^N g_\ell(n) $$
Since $\frac{N}{\mathcal T_N} \ll 1$, $\{h^*(k)\}$ is a strong Benford sequence if and only if 
$$  \lim_{N \rightarrow \infty}\frac{1}{N}\sum_{n = 1}^N g_\ell(n) = 0$$
for all integers $\ell \ne 0$. The ``if'' direction of Theorem \ref{thm:Benfordcriterion} now follows immediately from applying Hal\'{a}sz's Theorem (Theorem \ref{thm:Halasz}) to the functions $g_\ell$. 

For the ``only if'' direction, suppose that $\{h^*(k)\}$ is strong Benford, so that each $g_{\ell}$, with $\ell \ne 0$, has mean value $0$. By Theorem \ref{thm:Halasz}, it is enough to show that there is no nonzero integer $\ell$ and real number $\alpha$ for which $\mathbb{D}(g_{\ell},n^{i\alpha};\infty)  < \infty$ and $g_{\ell}(2^k)=-2^{ik\alpha}$ for all positive integers $k$. If such $\ell$ and $\alpha$ exist, consider $g_{2\ell}=g_{\ell}^2$. By the triangle inequality,
\[ \mathbb{D}(g_{2\ell},n^{2i\alpha};\infty) \le 2\cdot \mathbb{D}(g_{\ell},n^{i\alpha};\infty) < \infty. \]
Also, $g_{2\ell}(2) = (g_{\ell}(2))^2 = 2^{i(2\alpha)} \ne -2^{i(2\alpha)}$. Invoking Hal\'asz's theorem again, we see that $g_{2\ell}$ does not have mean value $0$, a contradiction.

 






\section{Proof of Corollary \ref{cor:poly} -- \ the function $n^a$}
Here, $f_\ell(p) = e^{2\pi i\ell \log_{10} p^a} = p^{\frac{2\pi i \ell a}{\ln 10}}.$ Let $\alpha_\ell = \frac{2\pi  \ell a}{\ln 10}.$ It is easy to see that
$$ \mathbb D(f_\ell, n^{i \alpha_\ell}; \infty)^2 = \sum_{p} \frac{1 - \Rep(f_\ell(p) p^{-i\alpha_\ell})}{p} = 0. $$
By Theorem \ref{thm:Benfordcri2}, $\{ n^a\}$ is not Benford. 

\section{Proof of Corollary \ref{cor:dkn} -- $k$-divisor functions}
For this case,  $f_\ell(n) = e^{2\pi i \ell \log_{10} d_k(n)}$ so $f_\ell(p) = e^{2\pi i \ell \log_{10} k}.$ The corollary will follow if we show the following.
\begin{enumerate}
\item  For $k \neq 10^j$, 
\begin{equation} \label{eqn:distdk} \mathbb D(f_\ell, n^{i\alpha}; \infty)^2 = \sum_{p} \frac{1- \Rep(e^{2\pi i \ell \log_{10} k} p^{-i\alpha})}{p} = \infty
\end{equation}
for all $\alpha \in \mathbb R.$
\item For $k = 10^j,$  $ \mathbb D(f_\ell, 1; \infty) < \infty$ for some $\ell \ne 0$.\end{enumerate}
\vskip 0.2in
First, we consider the case $k = 10^j,$ and we chose $\ell = 1$. 
$$ \mathbb D(f_1, 1; \infty)^2 = \sum_{p} \frac{1- \Rep(e^{2\pi i  \log_{10} 10^j} )}{p} =  \sum_p \frac{1 - 1}{p} = 0. $$
Thus by Theorem \ref{thm:Benfordcriterion}, $\{d_k(n)\}$ is not a Benford sequence.  
\\  
  
Now we consider the case $k \neq 10^j.$  Here, we will use some classical results.  The proof of the Lemma below can be found in \cite[Theorem 2.7]{MV}. 

\begin{lem}[Mertens' Theorem] \label{eqn:sum1p} For $x \geq 2,$ we have
\begin{equation*} 
 \sum_{p \leq x} \frac 1p = \ln \ln x + A + \mathfrak b(x), \ \ \ \ \ \ {where} \ \ \mathfrak b(x) = O\left( \frac{1}{\ln x}\right), 
 \end{equation*}
and 
$$ \sum_{p \leq x} \frac{\ln p}{p} = \ln x + O(1).$$
\end{lem}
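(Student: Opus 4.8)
The plan is to establish the two estimates in the order in which they are naturally derived: first the weighted sum $\sum_{p \le x} \frac{\ln p}{p}$, and then the sum $\sum_{p \le x} \frac{1}{p}$ by partial summation. The starting point is the prime factorization of $\lfloor x \rfloor!$. Writing $\Lambda$ for the von Mangoldt function, the identity $\ln m = \sum_{d \mid m} \Lambda(d)$, summed over $m \le x$, gives
$$ \ln \lfloor x \rfloor ! \;=\; \sum_{m \le x} \ln m \;=\; \sum_{n \le x} \Lambda(n) \left\lfloor \frac{x}{n} \right\rfloor. $$
On the left, Stirling's formula yields $\ln \lfloor x \rfloor! = x \ln x - x + O(\ln x)$. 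On the right, I would write $\lfloor x/n \rfloor = x/n + O(1)$; the resulting error is $O\left(\sum_{n \le x} \Lambda(n)\right) = O(\psi(x)) = O(x)$ by Chebyshev's classical upper bound for the Chebyshev function $\psi(x) = \sum_{n \le x}\Lambda(n)$. Comparing both sides and dividing by $x$ produces
$$ \sum_{n \le x} \frac{\Lambda(n)}{n} \;=\; \ln x + O(1). $$

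Next I would pass from the von Mangoldt sum to the sum over primes. Their difference is the contribution of the proper prime powers $p^k$ with $k \ge 2$, and
$$ \sum_{\substack{p^k \le x \\ k \ge 2}} \frac{\ln p}{p^k} \;\le\; \sum_p \ln p \sum_{k \ge 2} \frac{1}{p^k} \;=\; \sum_p \frac{\ln p}{p(p-1)}, $$
which is a convergent series, hence $O(1)$. Subtracting this bounded quantity gives $\sum_{p \le x} \frac{\ln p}{p} = \ln x + O(1)$, which is the second assertion of the lemma.

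Finally, to recover $\sum_{p \le x} \frac{1}{p}$ I would apply Abel summation to the estimate just obtained. Setting $T(t) = \sum_{p \le t} \frac{\ln p}{p} = \ln t + R(t)$ with $R(t) = O(1)$, and summing $\frac{1}{p} = \frac{\ln p}{p} \cdot \frac{1}{\ln p}$ against the smooth weight $1/\ln t$, I get
$$ \sum_{p \le x} \frac{1}{p} \;=\; \frac{T(x)}{\ln x} + \int_2^x \frac{T(t)}{t (\ln t)^2}\, dt. $$
The main term $\int_2^x \frac{\ln t}{t(\ln t)^2}\,dt = \ln \ln x - \ln \ln 2$ supplies the leading $\ln\ln x$, while $T(x)/\ln x = 1 + O(1/\ln x)$ and the convergent integral $\int_2^\infty \frac{R(t)}{t(\ln t)^2}\, dt$ collect into a single constant $A$ (which the lemma does not require us to identify explicitly). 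The one step demanding genuine care is the final error analysis: I must check that replacing the upper limit of the convergent integral by $\infty$ costs only $O(1/\ln x)$, which follows from $\int_x^\infty \frac{dt}{t(\ln t)^2} = 1/\ln x$ together with $R(t) = O(1)$. This bookkeeping of the tail, rather than any deep input, is the main obstacle; the only nontrivial external ingredients are Stirling's formula and Chebyshev's bound $\psi(x) \ll x$, both standard.
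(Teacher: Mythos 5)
Your proof is correct, and it is essentially the standard argument: the paper does not prove this lemma itself but cites \cite[Theorem 2.7]{MV}, where the proof proceeds exactly as you do, via Stirling's formula and Chebyshev's bound to get $\sum_{n\le x}\Lambda(n)/n = \ln x + O(1)$, removal of higher prime powers, and Abel summation with the tail estimate $\int_x^\infty \frac{dt}{t(\ln t)^2} = 1/\ln x$ to extract the constant $A$ and the error $O(1/\ln x)$. No gaps; your handling of the truncated convergent integral, the one delicate point, is done correctly.
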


We also state some classical bounds on $\zeta(s)$ near the $\tRe s = 1$ line.  We refer the reader to (3.5.1) and (3.11.8) of Titchmarsh's book \cite{Ti}.

\begin{lem} \label{lem:zetaprimeoverzeta} Let $s = \sigma + it$.  There exists some constant $c>0$ such that for  $1 - \frac{c}{\ln (|t| + 4)} < \sigma $ and $|t| > 2$, then 
$$\frac{1}{\ln |t|} \ll \zeta(s) \ll \ln |t|.$$
On the other hand, if  $|t| \leq 2$, 
$$  \zeta(s) \ll \frac{1}{|t|} + O(1).$$
\end{lem}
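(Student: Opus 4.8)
These are classical estimates — the two assertions are essentially Titchmarsh's (3.5.1) and (3.11.8) — so one legitimate option is to cite them directly, but the plan is to sketch the standard self-contained derivations. Throughout write $s = \sigma + it$ and $\delta = 1 - \sigma$, so that in the main range $|t| > 2$ we have $0 \le \delta < c/\ln(|t|+4)$, with $\ln(|t|+4) \asymp \ln|t|$; all the bounds on $\zeta(s)$ are understood as bounds on $|\zeta(s)|$.

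For the upper bound $|\zeta(s)| \ll \ln|t|$, the key input is the Euler--Maclaurin (truncated Dirichlet series) approximation
$$\zeta(s) = \sum_{n \le x} n^{-s} - \frac{x^{1-s}}{1-s} + O(x^{-\sigma}), \qquad \sigma > 0,\ x \ge 1.$$
I would choose $x \asymp |t|$. Then $n^{1-\sigma} = n^{\delta} \le x^{\delta} = \exp(\delta \ln x) \ll 1$, precisely because $\delta \ln x \ll c$, so the main sum satisfies $\sum_{n \le x} n^{-\sigma} \ll \sum_{n \le x} 1/n \ll \ln x \ll \ln|t|$. The remaining terms are $O(1)$: indeed $|x^{1-s}/(1-s)| = x^{\delta}/|1-s| \ll 1/|t|$ since $|1-s| \ge |t| > 2$, and $x^{-\sigma} \ll 1/x$. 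This yields $|\zeta(s)| \ll \ln|t|$, and the same expansion differentiated in $s$ gives the companion bound $|\zeta'(s)| \ll \ln^2|t|$ that I will need below.

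For the lower bound $|\zeta(s)| \gg 1/\ln|t|$, I would run the classical de la Vall\'ee Poussin argument based on $3 + 4\cos\theta + \cos 2\theta = 2(1 + \cos\theta)^2 \ge 0$. Applied to $\log\zeta$ through its Euler product on $\sigma > 1$, this gives
$$|\zeta(\sigma)|^{3}\,|\zeta(\sigma+it)|^{4}\,|\zeta(\sigma+2it)| \ge 1.$$
Feeding in the pole estimate $|\zeta(\sigma)| \ll 1/(\sigma-1)$ and the upper bound $|\zeta(\sigma+2it)| \ll \ln|t|$ produces a lower bound for $|\zeta(\sigma+it)|$ on lines $\sigma > 1$; applying the same inequality to $-\zeta'/\zeta$ together with the Hadamard partial-fraction expansion simultaneously yields the zero-free region $\sigma > 1 - c/\ln(|t|+4)$ and the bound $|\zeta'/\zeta(s)| \ll \ln|t|$ there. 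Integrating $\zeta'/\zeta$ from a point with $\sigma - 1 \asymp 1/\ln|t|$, where the first estimate already gives $|\zeta| \gg 1/\ln|t|$, leftward into the region then transfers the lower bound and produces $1/\ln|t| \ll |\zeta(s)|$.

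Finally, the range $|t| \le 2$ is governed entirely by the simple pole of $\zeta$ at $s = 1$: here $\ln(|t|+4)$ is bounded, so the hypothesis only places $s$ in a fixed region $\sigma > 1 - c'$ around the pole, and writing $\zeta(s) = 1/(s-1) + O(1)$ with $|s-1| \ge |t|$ gives $|\zeta(s)| \ll 1/|t| + O(1)$ at once. I expect the lower bound to be the only real obstacle: passing from the trivial inequality on $\sigma > 1$ to a quantitative bound valid throughout the zero-free region requires careful bookkeeping of the constant $c$ and control of $\zeta'/\zeta$ via the Hadamard expansion, whereas the upper bound and the $|t| \le 2$ case follow immediately from Euler--Maclaurin and the pole structure respectively.
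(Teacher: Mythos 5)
Your proposal is correct in outline, but it is worth noting that the paper does not prove this lemma at all: it is stated purely as a citation to (3.5.1) and (3.11.8) of Titchmarsh, exactly the "legitimate option" you mention at the outset. Your sketch is the standard derivation of those two estimates, and the key steps are sound: the truncated Dirichlet series with $x \asymp |t|$ gives $\zeta(s) \ll \ln|t|$ and $\zeta'(s) \ll \ln^2|t|$ in the stated region (just be aware that the $O(x^{-\sigma})$ error term requires $x \gg |t|$, so the choice $x \asymp |t|$ needs a suitably large implied constant); the $3+4\cos\theta+\cos 2\theta \ge 0$ inequality at $\sigma - 1 \asymp 1/\ln|t|$ gives $|\zeta(\sigma+it)| \gg (\sigma-1)^{3/4}(\ln|t|)^{-1/4} \gg 1/\ln|t|$ there; and the $|t|\le 2$ case is immediate from the pole. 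The one genuinely delicate point — which you handle correctly — is the transfer of the lower bound leftward into the zero-free region: integrating $\zeta'$ directly would lose a factor of $\ln|t|$ and fail, whereas integrating $\zeta'/\zeta \ll \ln|t|$ over a segment of length $O(1/\ln|t|)$ changes $\log\zeta$ by only $O(1)$ and preserves $|\zeta| \gg 1/\ln|t|$; establishing that bound on $\zeta'/\zeta$ throughout $\sigma > 1 - c/\ln(|t|+4)$ via the Hadamard partial-fraction expansion is where the bookkeeping you flag actually lives. In short, your route supplies the content the paper outsources to Titchmarsh, at the cost of reproducing the full de la Vall\'ee Poussin machinery; the paper's citation buys brevity, since nothing in the application (Lemma \ref{lem:sump1}) requires more than the statement itself.
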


\subsection{Proof of \eqref{eqn:distdk} when $k \neq 10^j$ }
Note that
$$ \Rep(e^{2\pi i \ell \log_{10} k} p^{-i\alpha}) = \cos(2\pi \ell \log_{10}k) \cos(\alpha \ln p) + \sin(2\pi \ell \log_{10} k )\sin(\alpha \ln p).$$
When $\alpha = 0$, 
$$  \mathbb D(f_\ell, 1; \infty)^2 = \sum_{p} \frac{1- \cos(2\pi \ell \log_{10}k)}{p}. $$

Since $\cos(2\pi \ell \log_{10}k) < 1$ when $k \neq 10^j,$  $\mathbb D(f_\ell, 1; \infty) = \infty$ by Lemma \ref{eqn:sum1p}.

From now on, we focus on $\alpha \neq 0.$ Since $\sum_{p} \frac 1p = \infty$, to prove \eqref{eqn:distdk}, it suffices to show that 
$$  \sum_{p \leq x} \frac{\cos (\alpha \ln p)}{p} = O(1),  \ \ \ \  \textrm{and} \ \ \ \ \  \sum_{p \leq x} \frac{\sin (\alpha \ln p)}{p} = O(1)$$
uniformly in $x$. Since $\Rep(p^{i \alpha}) = \cos(\alpha \ln p) $ and $\textrm{Im}(p^{i\alpha}) = \sin(\alpha \ln p),$  it suffices to prove the following Lemma. 

\begin{lem} \label{lem:sump1}Let $\alpha$ be a fixed nonzero real number. Then 
$$  \sum_{p \leq x} \frac{1}{p^{1 + i\alpha}}=  O_\alpha(1),$$
where the implied constant depends on $\alpha.$ 

\end{lem}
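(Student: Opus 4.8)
The plan is to reduce, by two applications of partial summation, the desired bound on $\sum_{p\le x} p^{-1-i\alpha}$ to a Prime Number Theorem type asymptotic for the twisted Chebyshev sum $A(x) := \sum_{n \le x} \Lambda(n)\, n^{-i\alpha}$, and then to establish that asymptotic by a contour-integration argument built on the bounds for $\zeta$ in Lemma \ref{lem:zetaprimeoverzeta}.

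First I would pass from primes to prime powers and from $p^{-1-i\alpha}$ to a $\log p$-weighted sum. Writing $T(x) = \sum_{p \le x} \frac{\log p}{p^{1+i\alpha}}$, partial summation gives $\sum_{p\le x} p^{-1-i\alpha} = \frac{T(x)}{\log x} + \int_2^x \frac{T(u)}{u (\log u)^2}\, du$, so since $\int_2^\infty \frac{du}{u(\log u)^2} < \infty$ it suffices to bound $T(x)$ uniformly. Since the contribution of proper prime powers $p^k$ with $k \ge 2$ is absolutely convergent, $T(x) = \sum_{n \le x} \frac{\Lambda(n)}{n^{1+i\alpha}} + O(1)$, and a further partial summation rewrites this as $\frac{A(x)}{x} + \int_1^x \frac{A(t)}{t^2}\, dt + O(1)$ with $A(x)$ as above. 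The whole problem thus comes down to showing $A(x) = \frac{x^{1-i\alpha}}{1-i\alpha} + O_\alpha\bigl(x \exp(-c\sqrt{\log x})\bigr)$: once this holds, the main term contributes $\frac{x^{-i\alpha}}{1-i\alpha}$ and $\frac{1}{1-i\alpha}\int_1^x t^{-1-i\alpha}\, dt$, both $O_\alpha(1)$ because $|x^{-i\alpha}| = 1$, while the error term contributes $O\bigl(\exp(-c\sqrt{\log x})\bigr) + O_\alpha\bigl(\int_1^\infty \frac{\exp(-c\sqrt{\log t})}{t}\, dt\bigr) = O_\alpha(1)$.

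The heart of the argument, and the step I expect to be the main obstacle, is the twisted Prime Number Theorem for $A(x)$. I would obtain it from Perron's formula applied to the Dirichlet series $\sum_n \Lambda(n)\, n^{-i\alpha}\, n^{-s} = -\frac{\zeta'}{\zeta}(s + i\alpha)$, integrating along $\tRe s = 1 + 1/\log x$ and then shifting the contour to $\tRe s = 1 - c/\log(|t|+4)$, i.e.\ just inside the classical zero-free region. Because $\alpha \ne 0$, the only pole crossed is the simple pole of $-\frac{\zeta'}{\zeta}(s+i\alpha)$ at $s = 1 - i\alpha$, whose residue produces exactly the main term $\frac{x^{1-i\alpha}}{1-i\alpha}$; the pole of $x^s/s$ at $s=0$ lies to the left of the shifted line and is never reached. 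The delicate part is estimating the shifted contour: Lemma \ref{lem:zetaprimeoverzeta} supplies the lower bound $|\zeta(s)| \gg 1/\log|t|$ needed to keep $1/\zeta$ under control, and combining its upper bound $|\zeta(s)| \ll \log|t|$ with Cauchy's estimate on a small disc yields a bound of the shape $|\frac{\zeta'}{\zeta}(s)| \ll (\log|t|)^2$ throughout the region. Feeding this into the vertical and horizontal integrals, together with the standard truncation error from Perron's formula, gives the quantitative saving $\exp(-c\sqrt{\log x})$. Since the implied constants are allowed to depend on $\alpha$, the fixed neighbourhood of the pole at $s = 1 - i\alpha$ and the bookkeeping near $t = -\alpha$ can be handled by crude bounds, which simplifies the estimate considerably.
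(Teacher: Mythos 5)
Your proof is correct, but it takes a genuinely different and substantially heavier route than the paper's. The paper never leaves the half-plane $\tRe s > 1$: it replaces $\sum_{p\le x} p^{-1-i\alpha}$ by $\sum_{p \le x} p^{-1 - 1/\ln x - i\alpha}$ at a cost of $O(1)$ using Mertens' estimate $\sum_{p\le x}\frac{\ln p}{p} \ll \ln x$, completes that sum over all primes via the Euler product to identify it with $\ln\zeta\left(1 + \frac{1}{\ln x} + i\alpha\right) + O(1)$, and then invokes Lemma \ref{lem:zetaprimeoverzeta} only at the single point $1 + \frac{1}{\ln x} + i\alpha$; no Perron formula, no contour shift, and no control of $\zeta'/\zeta$ are needed. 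Your route instead establishes the much stronger twisted prime number theorem $\sum_{n\le x}\Lambda(n)n^{-i\alpha} = \frac{x^{1-i\alpha}}{1-i\alpha} + O_\alpha\left(x\exp(-c\sqrt{\log x})\right)$ and unwinds it by two partial summations; the reductions and the final deduction are sound (in particular $\int_2^\infty e^{-c\sqrt{\log t}}\,\frac{dt}{t} < \infty$, so the error term integrates to $O_\alpha(1)$ as you claim). The price is that Lemma \ref{lem:zetaprimeoverzeta} bounds only $\zeta$, not $\zeta'/\zeta$, so you must manufacture the latter bound yourself: your Cauchy-estimate step works, but you should shift to a slightly narrower region $\sigma > 1 - c'/\log(|t|+4)$ with $c' < c$ so that discs of radius $\asymp 1/\log(|t|+4)$ stay inside the region where the lemma applies, and the exponent obtained this way is $(\log|t|)^3$ rather than $(\log|t|)^2$ (Cauchy gives $|\zeta'(s)|\ll (\log|t|)^2$ and separately $|1/\zeta(s)| \ll \log|t|$) --- harmless for the contour estimate, but worth stating correctly. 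In short, your argument is a correct instance of the classical Perron-plus-zero-free-region machinery and buys a quantitatively stronger conclusion, while the paper's argument is shorter, stays strictly to the right of the $1$-line, and uses strictly less information about $\zeta$.
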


 \begin{proof}

We claim that for any $y \geq 2$,
\begin{equation} \label{eqn:extra1logx}
\sum_{p \leq y} \left(\frac{1}{p^{1 + i\alpha}} - \frac{1}{p^{1 + \frac 1{\ln y} + i\alpha}} \right) = O(1),
\end{equation}
and 
\begin{equation} \label{eqn:sumtoinfty}
\sum_{p \leq y}  \frac{1}{p^{1 + \frac 1{\ln y} + i\alpha}}  =  \ln \zeta\left(1 + \frac 1{\ln y} + i\alpha\right) + O(1).
\end{equation}

\eqref{eqn:extra1logx} follows from  
\est{ \left|\sum_{p \leq y} \left(\frac{1}{p^{1 + i\alpha}} - \frac{1}{p^{1 + \frac 1{\ln y} + i\alpha}} \right) \right| \leq \sum_{p \leq y} \frac{1 - e^{-\frac{\ln p}{\ln y} }}{p} \ll  \frac{1}{\ln y}\sum_{p \leq y} \frac{\ln p}{p} \ll 1}
by Lemma \ref{eqn:sum1p}. For \eqref{eqn:sumtoinfty}, we will use the fact that 
$$ \zeta(s) = \prod_{p} \left( 1 - \frac 1{p^s} \right)^{-1} $$
for $\Rep(s) > 1$, Lemma \ref{eqn:sum1p} and partial summation to derive that 
\est{ \ln \zeta\left(1 + \frac 1{\ln y} + i\alpha\right) - \sum_{p \leq y}  \frac{1}{p^{1 + \frac 1{\ln y} + i\alpha}}  &= 
 \sum_{p > y}  \frac{1}{p^{1 + \frac 1{\ln y} + i\alpha}} + O(1) \\
&\ll  \int_{y}^\infty \frac{1}{u^{\frac 1{\ln y}}} \frac{1}{u\ln u} \> du = O(1).}

If $x \le 2$, the result is trivial.  Otherwise, by \eqref{eqn:extra1logx} and \eqref{eqn:sumtoinfty}, 
\es{ \label{eqn:secondprimesumdk} \sum_{p \leq x} \frac{1}{p^{1 + i\alpha}} &= \ln \zeta\left(1 + \frac 1{\ln x} + i\alpha \right) + O(1)  \\
&\ll_\alpha 1,}
by Lemma \ref{lem:zetaprimeoverzeta}.
 \end{proof}

\section{Proof of Corollary \ref{cor:phi} -- Euler's phi function}
For Euler's phi function, $f_{\ell}(p) = e^{2\pi \ell i \log_{10}(p-1)} = (p - 1)^{\frac{2\pi i \ell}{\ln 10}  }$. Let $\alpha = \frac{2\pi}{\ln 10}$. The Corollary follows from
$$\mathbb D(f_1, n^{i\alpha}; \infty )^2 < \infty. $$
Now,
\est{
\mathbb D(f_1, n^{ i\alpha}; \infty )^2 =  \sum_{p} \frac{1 - \Rep(( p -1)^{i\alpha}p^{-i\alpha})}{p} = \sum_{p} \frac{1 - \cos\left(\alpha\ln\left( 1 - \frac 1p\right)\right)}{p} . }

For large prime $p$ such that $|\frac{\alpha}p| < \frac 12,$
$$ \cos\left(\alpha\ln\left( 1 - \frac 1p\right)\right) = \cos \left( \alpha \left( - \frac 1p + O\left( \frac 1{p^2} \right)\right)\right) = 1 + O\left(\frac{\alpha^2}{p^2}\right).$$

Let $J = \max\{10, 2|\alpha| \}$. Thus
\est{
 \sum_{p} \frac{1 - \cos\left(\alpha\ln\left( 1 - \frac 1p\right)\right)}{p} &\ll \sum_{p \leq J} \frac{1}{p} + \sum_{p > J} \frac{\alpha^2}{p^3} \ll \ln \ln J + \frac{\alpha^2}{J^2} = O_\alpha(1). }
Thus, we conclude that $\{\varphi(n)\}$ is not a strong Benford sequence.  

\section{Proof of Corollary \ref{cor:newforms} -- Hecke eigenvalues of newforms}

Contrary to the previously considered multiplicative functions, $\lambda_f(n)$ might be $0$ for some $n.$  Our Corollary follows from Theorem \ref{thm:Benfordcriterion} and the following lemmas.
\begin{lem} \label{lem:NoverTN} Let $\mathcal T_N$ be defined as in Theorem \ref{thm:Benfordcriterion}. Then 
$$ \frac{N}{\mathcal T_N} \ll 1. $$

\end{lem}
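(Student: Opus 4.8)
The goal is to prove $\mathcal T_N \gg N$, i.e.\ that $\lambda_f(n) \neq 0$ for a positive proportion of $n \le N$. The plan is to produce a large, easily counted subfamily of such $n$. Since $\lambda_f$ is multiplicative, for squarefree $n$ we have $\lambda_f(n) = \prod_{p \mid n} \lambda_f(p)$, so $\lambda_f(n) \neq 0$ as soon as $n$ is squarefree and every prime $p \mid n$ satisfies $\lambda_f(p) \neq 0$. I would therefore set $B = \{p : \lambda_f(p) = 0\} \cup \{p : p \mid N\}$ and bound
\[ \mathcal T_N \ \ge\ \#\{ n \le N : n \text{ squarefree and } p \mid n \Rightarrow p \notin B\}. \]
Restricting to squarefree $n$ has the advantage of bypassing any analysis of $\lambda_f(p^a)$ at higher prime powers, where the Hecke recursion would otherwise have to be examined.

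The key input --- and the only place the non-CM hypothesis is used --- is that the exceptional set $B$ is sparse, in the precise sense that $\sum_{p \in B} 1/p < \infty$. The finitely many primes dividing $N$ contribute a bounded amount, so this reduces to the primes with $\lambda_f(p) = 0$. Here I would invoke the classical theorem of Serre that, for a non-CM newform, $\#\{p \le x : \lambda_f(p) = 0\} \ll x/(\log x)^{1+\delta}$ for some $\delta > 0$; partial summation then yields $\sum_{\lambda_f(p)=0} 1/p < \infty$. For a CM form this series diverges, which is exactly why the hypothesis cannot be dropped.

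With this in hand the remaining argument is elementary. Fix a large parameter $z$ and let $B_z = B \cap [1,z]$, a finite set. The squarefree integers $n \le N$ with no prime factor in $B_z$ form a set of positive density, computable by inclusion--exclusion over the finitely many primes of $B_z$ (its density is $\frac{6}{\pi^2}\prod_{p \in B_z}(1+1/p)^{-1} > 0$). From this count I would subtract the integers $n \le N$ divisible by some prime $p \in B$ with $p > z$; their number is at most $N \sum_{p \in B,\, p > z} 1/p$. Because $\sum_{p \in B} 1/p$ converges, its tail can be made smaller than half the density above by taking $z$ sufficiently large (and fixed, independent of $N$). Combining the two estimates gives $\mathcal T_N \gg N$, which is the claim.

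The main obstacle is entirely the analytic input of the second paragraph: the sparsity of primes with vanishing $\lambda_f(p)$ is a deep fact resting on the non-CM assumption and on Chebotarev-type equidistribution for the Galois representations attached to $f$. Once that sparsity (even in the weak form $\sum_{\lambda_f(p)=0}1/p < \infty$) is granted, the passage to a positive lower density for $\mathcal T_N$ is a routine sieve and density computation.
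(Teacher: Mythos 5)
Your argument is correct, and it rests on the same essential input as the paper: Serre's theorem on the sparsity of primes with $\lambda_f(p)=0$ for non-CM newforms. The difference is one of how much is cited versus proved. The paper's proof is a one-line citation: it quotes Serre's result in a form that already contains the assertion $\#\{n\le x:\lambda_f(n)\neq 0\}\gg x$, which is verbatim the content of the lemma. You instead take only the prime-level statement $\#\{p\le x:\lambda_f(p)=0\}\ll x/(\log x)^{1+\delta}$, deduce $\sum_{\lambda_f(p)=0}1/p<\infty$ by partial summation, and then carry out the elementary sieve (restrict to squarefree $n$ free of prime factors in the exceptional set $B$, count those avoiding $B\cap[1,z]$ with density $\tfrac{6}{\pi^2}\prod_{p\in B,\,p\le z}(1+1/p)^{-1}$, and discard the $O(N\sum_{p\in B,\,p>z}1/p)$ integers divisible by a large exceptional prime). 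This is in substance how Serre himself passes from primes to integers, so you have made the paper's citation self-contained rather than found a new route; the restriction to squarefree $n$ is a clean way to avoid discussing $\lambda_f(p^a)$ at higher prime powers, and your observation that the convergence of $\sum_{p\in B}1/p$ (hence the whole lemma) fails for CM forms correctly locates where the hypothesis is used. Two small points: your set $B$ mixes the level of $f$ with the truncation parameter, both of which are called $N$ --- harmless since the level contributes only finitely many primes, but worth disentangling notationally; and the inclusion of $\{p: p\mid \mathrm{level}\}$ in $B$ is not needed for the count, only a convenience.
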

\begin{lem} \label{eqn:distnewform} Let $g_\ell(n) = e^{2\pi \i \ell \log_{10}|\lambda_f(n)|}$ if $\lambda_f(n) \neq 0$ and 0 otherwise. Then  
$$ \mathbb D(g_\ell, n^{i\alpha}; \infty) = \infty$$
for all $\alpha \in \mathbb R$ and $\ell \neq 0.$
\end{lem}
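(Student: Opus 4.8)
The plan is to work entirely at the primes, since $\mathbb{D}(g_\ell, n^{i\alpha};x)$ only sees $g_\ell(p)$. By Deligne's bound I write $\lambda_f(p) = 2p^{(k-1)/2}\cos\theta_p$ with $\theta_p \in [0,\pi]$ the Sato--Tate angle, so that at every prime with $\lambda_f(p)\neq 0$,
\[ g_\ell(p) = c_\ell\, p^{i\beta_\ell}\,|\cos\theta_p|^{i\delta}, \qquad c_\ell = e^{2\pi i\ell\log_{10}2},\quad \beta_\ell = \tfrac{\pi\ell(k-1)}{\ln 10},\quad \delta = \tfrac{2\pi\ell}{\ln 10}\neq 0. \]
Writing $\gamma = \beta_\ell-\alpha$ and $W(\theta) = c_\ell|\cos\theta|^{i\delta}$ (a function of modulus $1$), and noting that each prime with $\lambda_f(p)=0$ contributes exactly $1/p$ to the distance, Mertens' theorem (Lemma \ref{eqn:sum1p}) gives the clean identity
\[ \mathbb{D}(g_\ell, n^{i\alpha};x)^2 = \sum_{p\le x}\frac1p - \Rep\,\Sigma(x) + O(1), \qquad \Sigma(x) := \sum_{\substack{p\le x\\ \lambda_f(p)\neq 0}}\frac{W(\theta_p)\,p^{i\gamma}}{p}. \]
Hence it suffices to prove $\Rep\,\Sigma(x) \le (1-c)\log\log x + o(\log\log x)$ for a fixed $c>0$, uniformly over all $\alpha$ and all $\ell\neq0$; this forces $\mathbb{D}\to\infty$.

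I would split into two cases according to whether the twist $p^{i\gamma}$ is present. When $\alpha=\beta_\ell$, i.e. $\gamma=0$, ordinary Sato--Tate equidistribution for $f$ (a theorem for non-CM newforms via the symmetric power functoriality of Newton--Thorne), applied to the fixed Riemann-integrable test function $W$ and combined with partial summation against $\sum_{p\le x}1/p$, gives $\Sigma(x) = \bar W\log\log x + o(\log\log x)$ with $\bar W = \int_0^\pi W\,d\mu_{\mathrm{ST}}$. Since $\delta\neq0$ makes $W$ a nonconstant function of modulus $1$, one has $\Rep\,\bar W < 1$ strictly, which settles this case.

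The substantive case is $\gamma\neq0$. I subtract the mean: $\Sigma(x) = \bar W\sum_{p\le x}p^{i\gamma-1} + E(x)$ (up to the negligible zero-primes), where the first term is $O_\gamma(1)$ by Lemma \ref{lem:sump1}, so everything reduces to $E(x) := \sum_{p\le x}\frac{(W(\theta_p)-\bar W)p^{i\gamma}}{p} = o(\log\log x)$. I would expand $W-\bar W = \sum_{n\ge 1}\hat W_n\,U_n(\cos\theta_p)$ in the Chebyshev polynomials $U_n$, which form the orthonormal basis for the Sato--Tate measure and satisfy $U_n(\cos\theta_p) = \lambda_f(p^n)/p^{n(k-1)/2}$. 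The key is
\[ \sum_{p\le x}\frac{U_n(\cos\theta_p)}{p^{1-i\gamma}} = \log L(\mathrm{sym}^n f,\, 1-i\gamma) + O(n), \]
uniformly for $n$ small relative to $x$: by Newton--Thorne each $\mathrm{sym}^n f$ is cuspidal automorphic, so $L(\mathrm{sym}^n f,s)$ is entire and nonvanishing on $\Rep(s)=1$ (Jacquet--Shalika), whence the right-hand side is $\ll_\gamma n$. Truncating the Chebyshev series at a slowly growing degree $\mathcal N=\mathcal N(x)\to\infty$ with $\mathcal N=o(\log\log x)$, the low-degree part is $\sum_{n\le\mathcal N}|\hat W_n|\cdot O(n)$, while the tail is bounded, via Cauchy--Schwarz and a quantitative Sato--Tate estimate, by $\big(\sum_{n>\mathcal N}|\hat W_n|^2\big)^{1/2}\log\log x$; optimizing $\mathcal N$ then yields $E(x)=o(\log\log x)$.

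I expect the main obstacle to be exactly this balancing act. Evaluating $\int_0^\pi |\cos\theta|^{i\delta}\cos(m\theta)\,d\theta$ by the Beta integral shows the Chebyshev coefficients decay only like $|\hat W_n|\asymp 1/n$; this slow decay is forced by the oscillatory singularity of $|\cos\theta_p|^{i\delta}$ at $\theta=\pi/2$, i.e. precisely at the primes where $\lambda_f(p)$ is small. Consequently $\sum_n|\hat W_n|\,|\log L(\mathrm{sym}^n f,1-i\gamma)|$ is not absolutely convergent, so one cannot sum term by term and the head/tail split must be made genuinely quantitative. Controlling the low-degree sum uniformly in $x$ requires the standard zero-free region and $L'/L$ bounds for the symmetric power $L$-functions (legitimizing the evaluation at $1+\tfrac{1}{\log x}-i\gamma$), and controlling the tail requires effective Sato--Tate with a test function whose degree grows with $x$. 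These equidistribution inputs, rather than any formal manipulation, are the crux of the proof.
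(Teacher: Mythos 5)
Your reduction to estimating $\Sigma(x)=\sum_{p\le x,\,\lambda_f(p)\ne 0} W(\theta_p)p^{i\gamma-1}$, your case split on $\gamma=0$ versus $\gamma\ne 0$, and your treatment of the case $\gamma=0$ all coincide with the paper's argument (the paper's Lemma \ref{lem:Relambdap} is precisely the statement that this sum is $(\mathcal K+o(1))\ln\ln x$, with $\mathcal K<1$ forced by the nonconstancy of $W$). Where you diverge is the case $\gamma\ne 0$: you correctly identify that everything reduces to $E(x)=\sum_{p\le x}(W(\theta_p)-\bar W)p^{i\gamma-1}=o(\ln\ln x)$, but you then propose to prove this via a Chebyshev expansion of $W$, symmetric power $L$-functions (Newton--Thorne, Jacquet--Shalika), and an effective Sato--Tate theorem for test functions of growing degree. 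That program is plausible but far from complete as written --- the uniform bound $\log L(\mathrm{sym}^n f,\,1-i\gamma)\ll n$ and the tail estimate both require serious additional work --- and, more to the point, it is unnecessary. The paper disposes of this case with a single partial summation: writing $B(t)=\sum_{p\le t}W(\theta_p)=\bar W\pi(t)+o(\pi(t))$ (qualitative Sato--Tate only), one has
\[ \sum_{p\le x}\frac{W(\theta_p)}{p^{1-i\gamma}} = B(x)\,x^{i\gamma-1}-\int_2^x B(t)\,\mathrm{d}\bigl(t^{i\gamma-1}\bigr) = \bar W\sum_{p\le x}\frac{1}{p^{1-i\gamma}}+O\Bigl(\int_2^x o(\pi(t))\,\frac{1+|\gamma|}{t^2}\,\mathrm{d}t\Bigr), \]
where the error integral is $o(\ln\ln x)$ because $\int_2^x \pi(t)t^{-2}\,\mathrm{d}t\asymp\ln\ln x$, and the main term is $O_\gamma(1)$ by the paper's Lemma \ref{lem:sump1}, a classical zeta-function estimate. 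In other words, the $o(\pi(t))$ pointwise error in the untwisted Sato--Tate counting function already yields the required $o(\ln\ln x)$ cancellation in the twisted sum; no joint or quantitative equidistribution of $(\theta_p,p^{i\gamma})$ is needed, and the $1/n$ decay of the Chebyshev coefficients never enters. The obstacle you flag as the crux of the proof therefore dissolves, and your argument becomes correct and complete if you replace the entire symmetric-power apparatus with this one partial summation step.
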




Lemma \ref{lem:NoverTN} follows immediately from the work of Serre stated in Theorem \ref{thm:serre} below.  We start by stating some now well known properties of $\lambda_f(p)$. 

\subsection{Properties of $\lambda_f(p)$} Weil conjectured that for all primes $p$, 
$$ |\lambda_f(p)| \leq 2p^{\frac{k-1}{2}},$$
and this is proven by Deligne \cite{Deligne}. Therefore for each $p$, there exists unique $\theta_p \in [0, \pi]$ such that 
$$ \lambda_f(p) = 2 p^{\frac{k-1}{2}} \cos \theta_p.$$

Sato and Tate studied the distribution of ${\cos \theta_p}$, varying through $p$ for newforms $f$ associated with elliptic curves. They conjectured that $\{\cos \theta_p\}$ is equidistributed in $[-1, 1]$ with respect to a certain measure. Later Barnet-Lamb, Geraghty, Harris, and Taylor \cite{BGHT} proved the conjecture, and in fact they generalized it for the larger class of Hecke newforms, which we state below.  

\begin{thm}[The Sato-Tate Conjecture] \label{thm:satotate} Let $f(z) = \sum_{n = 1}^\infty \lambda_f(n)q^n \in S_k^{new}(\Gamma_0(N))$ be a
newform of even weight $k \geq 2$ without complex multiplication. Let $F\colon [-1,1] \rightarrow \mathbb C$ be a Riemann-integrable function. Then
$$ \lim_{x \rightarrow \infty} \frac{1}{\pi(x)} \sum_{p \leq x} F(\cos(\theta_p)) = \int_{-1}^{1} F(t) \, \mathrm{d}\mu_{ST}, $$
where $\pi(x) = \# \{ p \leq x  \}$ is the prime counting function, and 
$$ \mathrm{d}\mu_{ST} = \frac{2}{\pi} \sqrt{1 - t^2} \, \mathrm{d}t. $$


\end{thm}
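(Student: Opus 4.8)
The statement is the Sato--Tate equidistribution law, and the natural plan is to deduce it from a Weyl-type criterion adapted to the measure $\mathrm{d}\mu_{ST} = \frac{2}{\pi}\sqrt{1-t^2}\,\mathrm{d}t$. The key observation is that the Chebyshev polynomials of the second kind, written as $U_m(\cos\theta) = \frac{\sin((m+1)\theta)}{\sin\theta}$, form an orthonormal basis of $L^2(\mathrm{d}\mu_{ST})$, that is $\int_{-1}^{1} U_m(t)\,U_n(t)\,\mathrm{d}\mu_{ST} = \delta_{m,n}$. By the Weierstrass approximation theorem, a Riemann-integrable $F$ may be sandwiched between continuous functions and those approximated uniformly by polynomials in $t$, which expand in the $U_m$. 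Hence it suffices to prove that for each fixed integer $m \geq 1$,
\[
\lim_{x\to\infty}\frac{1}{\pi(x)}\sum_{p\le x} U_m(\cos\theta_p) = 0,
\]
the case $m=0$ merely recording that $\mu_{ST}$ is a probability measure.

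\textbf{Reduction to $L$-functions.} First I would connect these character sums to symmetric power $L$-functions. Since $\lambda_f(p) = 2p^{(k-1)/2}\cos\theta_p$, the normalized Satake parameters at $p$ are $e^{\pm i\theta_p}$, and $U_m(\cos\theta_p) = \sum_{j=0}^{m} e^{i(m-2j)\theta_p}$ is exactly the trace of the $m$-th symmetric power of the local Satake class. Consequently the Dirichlet series $\sum_p U_m(\cos\theta_p)\,p^{-s}$ agrees, up to a term holomorphic and bounded on $\Rep(s)\ge 1$, with $\log L(s,\mathrm{Sym}^m f)$. The desired prime-sum asymptotic then reduces, by a standard Wiener--Ikehara / Tauberian argument, to the assertion that for each $m\ge 1$ the function $L(s,\mathrm{Sym}^m f)$ extends to be holomorphic and non-vanishing on the closed half-plane $\Rep(s)\ge 1$.

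\textbf{The main obstacle.} The hard part is precisely this analytic continuation and edge non-vanishing on $\Rep(s)=1$, which is inaccessible by classical methods for general $m$. The route I would follow is the one carried out by Barnet-Lamb, Geraghty, Harris and Taylor: establish the \emph{potential automorphy} of each $\mathrm{Sym}^m f$, i.e.\ automorphy of its base change to a suitable totally real field, via automorphy lifting (Taylor--Wiles--Kisin) theorems in a residually automorphic setting, together with the construction of auxiliary families interpolating between the given Galois representation and one already known to be automorphic. Potential automorphy yields the meromorphic continuation and functional equation of $L(s,\mathrm{Sym}^m f)$; a Brauer-induction argument upgrades this to holomorphy, and non-vanishing on $\Rep(s)=1$ follows from the corresponding property of automorphic $L$-functions at the edge of the critical strip. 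This modularity-lifting input is the genuinely deep ingredient, with everything else being the formal equidistribution machinery; accordingly, in the sequel we invoke it as the cited Sato--Tate theorem and pass directly to deducing Lemma~\ref{eqn:distnewform}.
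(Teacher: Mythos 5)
The paper does not prove this statement at all: Theorem \ref{thm:satotate} is quoted verbatim as the theorem of Barnet-Lamb, Geraghty, Harris and Taylor \cite{BGHT}, so the ``proof'' in the paper is a citation. Your sketch is a correct outline of how that result is actually established (Weyl-type equidistribution via the Chebyshev polynomials $U_m$, which are indeed orthonormal for $\mathrm{d}\mu_{ST}$; reduction to holomorphy and non-vanishing of $L(s,\mathrm{Sym}^m f)$ on $\Rep s \ge 1$; and potential automorphy as the deep input), and since you too ultimately invoke \cite{BGHT} as a black box, your treatment is essentially the same as the paper's.
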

The next result was proven by Serre in \cite{Serre} (see Theorem 15, the accompanying Corollary 2 and Theorem 16).  Interested readers may also peruse the later works of Wan \cite{Wan} for an improvement, and of Murty \cite{Murty} and Thorner and Zaman \cite{TZ} for further refinements.

\begin{thm} \label{thm:serre}Let $f(z)$ be defined as in Theorem \ref{thm:satotate}. Then for any $\delta < 1/2$,
$$\#\{p\le x: \lambda_f(p) = 0\} \ll_{f, \delta} \frac{x}{\log^{1+\delta} x}.
$$Hence
$$\sum_{\substack{p\\ \lambda_f(p) = 0}} \frac 1p < \infty,
$$and
$$\#\{n\le x: \lambda_f(n) \neq 0\} \gg x.
$$
\end{thm}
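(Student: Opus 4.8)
The plan is to treat the first estimate as the substantive input and then deduce the two consequences by elementary means. For the bound $\#\{p \le x : \lambda_f(p) = 0\} \ll_{f,\delta} x/(\log x)^{1+\delta}$, the natural route is through the $\ell$-adic Galois representations $\rho_\ell \colon \mathrm{Gal}(\overline{\mathbb Q}/\mathbb Q) \to \mathrm{GL}_2(\mathbb Z_\ell)$ attached to $f$, which satisfy $\mathrm{tr}\,\rho_\ell(\mathrm{Frob}_p) = \lambda_f(p)$ for $p \nmid N\ell$. If $\lambda_f(p) = 0$, then $\mathrm{tr}\,\bar\rho_\ell(\mathrm{Frob}_p) \equiv 0 \pmod{\ell}$ for every such $\ell$. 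Because $f$ has no complex multiplication, Serre's open-image theorem guarantees that the mod-$\ell$ image contains $\mathrm{SL}_2(\mathbb F_\ell)$ for all large $\ell$, and in such a group the trace-zero elements form a conjugation-stable set of relative size $O(1/\ell)$. Applying the effective Chebotarev density theorem to the field cut out by $\bar\rho_\ell$ and optimizing the choice of $\ell$ as a slowly growing function of $x$ yields the stated power-of-log saving; the exponent $1+\delta$ with $\delta < 1/2$ reflects what the unconditional form of effective Chebotarev provides. I regard this as the main obstacle: it rests on the big-image theorem and on effective prime counting in a tower of number fields, and it is exactly here that I would invoke Serre's theorem rather than reprove it.

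Granting the first estimate, the convergence $\sum_{\lambda_f(p)=0} 1/p < \infty$ follows by partial summation. Writing $N_0(t) = \#\{p \le t : \lambda_f(p) = 0\}$ and fixing any $\delta \in (0,1/2)$, say $\delta = 1/4$, we have
\[
\sum_{\substack{p \le x \\ \lambda_f(p) = 0}} \frac 1p = \frac{N_0(x)}{x} + \int_2^x \frac{N_0(t)}{t^2}\,dt \ll \frac{1}{(\log x)^{5/4}} + \int_2^\infty \frac{dt}{t(\log t)^{5/4}},
\]
and the final integral converges because the exponent $5/4$ exceeds $1$. Letting $x \to \infty$ gives the claim.

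For the lower bound $\#\{n \le x : \lambda_f(n) \ne 0\} \gg x$, I would exploit multiplicativity together with the sparseness of $\mathcal Q := \{p : \lambda_f(p) = 0\}$. If $n$ is squarefree and divisible by no prime of $\mathcal Q$, then $\lambda_f(n) = \prod_{p \mid n} \lambda_f(p)$ is a product of nonzero reals, hence nonzero; so it suffices to produce $\gg x$ such $n$. Set $c = \frac{6}{\pi^2}\prod_{p \in \mathcal Q}(1 + 1/p)^{-1}$, which is strictly positive because $\sum_{p \in \mathcal Q} 1/p < \infty$ forces the product to converge. Choose $y$ with $\sum_{p \in \mathcal Q,\, p > y} 1/p < c/2$. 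By a standard squarefree Euler-product computation, the squarefree integers $n \le x$ coprime to the finite set $\mathcal Q \cap [1,y]$ number $(c_y + o(1))x$ with $c_y \ge c$, while at most $\sum_{p \in \mathcal Q,\, p > y} x/p < (c/2)x$ of them can be divisible by a larger prime of $\mathcal Q$. Subtracting leaves at least $(c/2 + o(1))x \gg x$ admissible $n$, which completes the deduction.

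In summary, the only deep ingredient is the counting bound for vanishing Frobenius traces, which I take from Serre; the passage to $\sum 1/p < \infty$ is routine partial summation, and the passage to $\#\{n \le x : \lambda_f(n)\ne 0\}\gg x$ is a short sieve argument resting on multiplicativity and on the convergence just established.
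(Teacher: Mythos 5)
Your proposal is correct. Note, however, that the paper does not prove this theorem at all: it is quoted verbatim from Serre's paper \emph{Quelques applications du th\'eor\`eme de densit\'e de Chebotarev} (Theorem 15, its Corollary 2, and Theorem 16 there), so all three assertions---not just the first---are taken as citations. Your outline of the first bound (reduction mod $\ell$ of the attached Galois representations, the open-image theorem in the non-CM case, effective Chebotarev in the fields cut out by $\bar\rho_\ell$, and optimization of $\ell$ against $x$) is a faithful sketch of how Serre actually proves it, and your deductions of the two consequences are sound: the partial summation giving $\sum_{\lambda_f(p)=0}1/p<\infty$ is exactly the standard argument, and your sieve for $\#\{n\le x:\lambda_f(n)\neq 0\}\gg x$ (restricting to squarefree $n$ free of primes in $\mathcal Q$, using the convergence of $\sum_{p\in\mathcal Q}1/p$ to make the Euler factor $\prod_{p\in\mathcal Q}(1+1/p)^{-1}$ positive, and truncating at a threshold $y$) is a correct and self-contained route to the third statement, which Serre obtains by a closely related argument. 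The only thing you give up relative to the source is precision in the exponent and uniformity in $f$, neither of which matters for the application here. In short: same deep input, and your elementary superstructure around it is valid.
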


Note that Serre's result immediately implies Lemma \ref{lem:NoverTN}.  We prove Lemma \ref{eqn:distnewform} below.

\subsection{Proof of Lemma \ref{eqn:distnewform} }
From the definition of $g_\ell(n),$
$$ \mathbb D(g_\ell, n^{i\alpha}; \infty)^2 = \sum_{\substack{p \\ \lambda_f(p) \neq 0}} \frac{1 - \Rep(e^{2\pi i \ell \log_{10}|\lambda_f(p)|} p^{-i\alpha})}{p} + \sum_{\substack{p \\ \lambda_f(p) = 0}} \frac{1}{p}. $$
We will deduce Lemma \ref{eqn:distnewform} from the following Lemma.

\begin{lem} \label{lem:Relambdap} Let $\beta = \frac{2\pi \ell}{\ln 10}$ for $\ell \neq 0$, and $\gamma = \frac{k-1}{2} \beta - \alpha.$   Then 
\est{\left|\sum_{\substack{p \leq x \\ \lambda_f(p) \neq 0}} \frac{ \tRe \left({|2\cos\theta_p|^{i\beta } p^{i \gamma }}\right)}{p} \right| = (\mathcal K + o(1)) \ln\ln x }
where 
\begin{align*}
\mathcal K = 
\begin{cases}
\frac{4}{\pi} \int_0^1 \cos(\beta \ln (2y)) \sqrt{1 - y^2}\, \mathrm{d}y &\textup{ if $\gamma = 0$ } \\
0 &\textup{ otherwise.}
\end{cases}
\end{align*}
\end{lem}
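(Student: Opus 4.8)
The plan is to recognize the summand as the real part of a single complex sum over primes and to evaluate that sum by feeding the Sato--Tate distribution (Theorem~\ref{thm:satotate}) into Abel summation; the cases $\gamma = 0$ and $\gamma \neq 0$ are separated entirely by the presence or absence of cancellation in the oscillatory factor $p^{i\gamma}$.

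First I would record the bookkeeping identity behind the statement. Since $\lambda_f(p) = 2p^{(k-1)/2}\cos\theta_p$, we have $|\lambda_f(p)|^{i\beta} = |2\cos\theta_p|^{i\beta}\,p^{i\beta(k-1)/2}$, so that $e^{2\pi i \ell \log_{10}|\lambda_f(p)|}\,p^{-i\alpha} = |2\cos\theta_p|^{i\beta}\,p^{i\gamma}$ with $\beta = 2\pi\ell/\ln 10$ and $\gamma = \tfrac{k-1}{2}\beta - \alpha$. Setting $\Phi(p) := |2\cos\theta_p|^{i\beta}$ when $\lambda_f(p)\neq 0$ and $\Phi(p):=0$ otherwise, we may write $\Phi(p) = F^{*}(\cos\theta_p)$ for $F^{*}(t) = |2t|^{i\beta}$ (with $F^{*}(0):=0$), a bounded function whose only discontinuity on $[-1,1]$ is at $t=0$, hence Riemann-integrable. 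Because $\Phi$ vanishes on the primes with $\lambda_f(p)=0$, the sum in the lemma is exactly $\Rep\sum_{p\le x}\Phi(p)\,p^{i\gamma-1}$, and it suffices to evaluate this complex sum.

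Next I would apply Theorem~\ref{thm:satotate} to $F^{*}$, obtaining $A(x) := \sum_{p\le x}\Phi(p) = (\mu + o(1))\,\pi(x)$, where $\mu = \tfrac{2}{\pi}\int_{-1}^{1}|2t|^{i\beta}\sqrt{1-t^2}\,dt$; by evenness of the integrand $\Rep(\mu) = \tfrac{4}{\pi}\int_0^1\cos(\beta\ln 2t)\sqrt{1-t^2}\,dt = \mathcal K$. Abel summation then gives
\[
\sum_{p\le x}\Phi(p)\,p^{i\gamma-1} = A(x)\,x^{i\gamma-1} - (i\gamma-1)\int_2^x A(t)\,t^{i\gamma-2}\,dt,
\]
into which I substitute $A(t) = \mu\,\pi(t) + R(t)$ with $R(t) = o(\pi(t))$. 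The boundary term has size $A(x)/x \ll 1/\ln x = o(1)$, and the error contribution is harmless in both cases: $|R(t)\,t^{i\gamma-2}| = o(\pi(t)/t^2) = o\bigl(1/(t\ln t)\bigr)$, so $\int_2^x R(t)\,t^{i\gamma-2}\,dt = o(\ln\ln x)$.

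Everything therefore reduces to the main integral $\int_2^x \pi(t)\,t^{i\gamma-2}\,dt$, and this is exactly where the two regimes diverge. For $\gamma = 0$, integrating by parts (equivalently, comparing with Mertens' theorem, Lemma~\ref{eqn:sum1p}) yields $\int_2^x \pi(t)\,t^{-2}\,dt = \sum_{p\le x}\tfrac1p + O(1) = \ln\ln x + O(1)$, so the sum is $\mu\ln\ln x + o(\ln\ln x)$ and its real part is $(\mathcal K + o(1))\ln\ln x$. For $\gamma \neq 0$, I would integrate by parts using the antiderivative $t^{i\gamma-1}/(i\gamma-1)$ of $t^{i\gamma-2}$, which tends to $0$; this collapses the integral to $-\tfrac{1}{i\gamma-1}\sum_{p\le x}p^{i\gamma-1} + O(1)$, and the prime sum is $O(1)$ by Lemma~\ref{lem:sump1} (taken at $\alpha = -\gamma$). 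Hence the whole sum is $O(1) + o(\ln\ln x) = o(\ln\ln x)$, matching $\mathcal K = 0$. I expect the $\gamma\neq 0$ case to be the crux: the tempting route through the $\tfrac1p$-weighted partial sums destroys the oscillation in the error term, so the decisive idea is to carry the \emph{unweighted} counting function $A(t)$ --- whose Sato--Tate asymptotic is clean --- and to route the main integral through Lemma~\ref{lem:sump1}. The remaining subtlety is that Sato--Tate supplies no error rate; one must check that the rateless $o(\pi(t))$ term still integrates to $o(\ln\ln x)$, which it does precisely because of the $1/(t\ln t)$ weight.
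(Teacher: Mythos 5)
Your proposal is correct and follows essentially the same route as the paper: apply the Sato--Tate theorem to the bounded Riemann-integrable function $|2t|^{i\beta}$ (the singularity at $t=0$ being harmless), feed the resulting asymptotic for the unweighted prime counts into partial summation, and then dispatch the main term via Mertens' theorem when $\gamma=0$ and via Lemma~\ref{lem:sump1} when $\gamma\neq 0$. The only difference is cosmetic --- you carry a single complex-valued sum where the paper splits $\tRe\left(|2\cos\theta_p|^{i\beta}p^{i\gamma}\right)$ into cosine and sine pieces and treats two real sums --- and your packaging is, if anything, slightly tidier.
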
 

There is an interval of positive measure inside $[0, 1]$ on which $\cos(\beta \ln (2y)) < 1$, so by continuity,

\es{\label{eqn:setupcalK}&\frac{4}{\pi} \int_{0}^1 \cos(\beta \ln (2y)) \sqrt{1 - y^2} \, \mathrm{d}y \\
&<  \frac{4}{\pi} \int_{0}^1 \sqrt{1 - y^2} \, \mathrm{d}y = 1,} so $\mathcal K <1$.  Thus, by Lemma \ref{lem:Relambdap}, 

\est{\mathbb D(g_\ell, n^{i\alpha}; x)^2 &\ge \sum_{\substack{p \leq x \\ \lambda_f(p) \ne 0}} \frac{1 - \Rep \left({|2\cos\theta_p|^{i\beta } p^{i \gamma }}\right)}{p} \gg \ln \ln x }
since $\sum_{p \leq x} \frac 1p \sim \ln \ln x$ (Lemma \eqref{eqn:sum1p}), and so
$$ \mathbb D(g_\ell, n^{i\alpha}; \infty) = \infty,$$
as required for Lemma \ref{eqn:distnewform}.  We now prove Lemma \ref{lem:Relambdap}.

\begin{proof}
Note that 
$$\Rep \left({|2\cos\theta_p|^{i\beta } p^{i \gamma }}\right) = \cos(\beta \ln (2|\cos \theta_p|)) \cos(\gamma \ln p) - \sin(\beta \ln (2 |\cos \theta_p|)) \sin(\gamma \ln p). $$

Thus, it suffices to show that
\es{ \label{eqn:cosbigcos}\sum_{\substack{p \leq x \\ \lambda_f(p) \neq 0 } } \frac{\cos(\beta \ln (2|\cos \theta_p|)) \cos(\gamma \ln p)}{p} = (\mathcal K + o(1)) \ln\ln x }
 and 
\es{\label{eqn:cosbigsin} \sum_{\substack{p \leq x \\ \lambda_f(p) \neq 0 } } \frac{\sin(\beta \ln (2 |\cos \theta_p|)) \sin(\gamma \ln p)}{p} = o(\ln\ln x) . }
We proceed to prove \eqref{eqn:cosbigcos}, noting that \eqref{eqn:cosbigsin} follows similarly.

By partial summation, the left-hand side of \eqref{eqn:cosbigcos} is
\es{\label{eqn:cosafterPS}&\left(\sum_{\substack{ p \leq x \\\lambda_f(p) \neq 0 } }  \cos(\beta \ln (2|\cos \theta_p|))  \right) \frac{\cos(\gamma \ln x)}{x} - \int_2^x \left( \sum_{\substack{p \leq t \\ |\cos \theta_p| > 0 } }  \cos(\beta \ln (2|\cos \theta_p|)) \right)\, \mathrm{d} \frac{\cos(\gamma \ln t)}{t} }

Let $I = [-1, 0) \cup (0, 1]$ and $\chi_I(x)$ denote the characteristic function of $I$, which is $1$ if $x \in I$, and $0$ otherwise. Note that $\cos(\beta \ln(2|y|))$ is Riemann integrable over $[-1, 1]$, the singularity at $y=0$ being benign.

Thus, by the Sato-Tate conjecture (Theorem \ref{thm:satotate}), 
\est{\lim_{x \rightarrow \infty} \frac{1}{\pi (x)} \sum_{\substack{ p \leq x \\ \lambda_f(p) \neq 0 } }  \cos(\beta \ln (2|\cos \theta_p|))  &=  \lim_{x \rightarrow \infty} \frac{1}{\pi (x)} \sum_{\substack{ p \leq x } }  \cos(\beta \ln (2|\cos \theta_p|)) \chi_{I} (\cos \theta_p) \\
&= \frac{4}{\pi} \int_{0}^1 \cos(\beta \ln (2y)) \sqrt{1 - y^2} \> \mathrm{d}y =: \mathcal K_0,}
since the integrand is even.  By the prime number theorem, $\pi(x) \sim \frac{x}{\ln x}$, we obtain that \eqref{eqn:cosafterPS} is 
\es{\label{eqn:cosafterPS2} &\left(\mathcal K_0 \pi(x) + o(\pi(x))  \right) \frac{\cos(\gamma \ln x)}{x} - \int_2^x \left( \mathcal K_0 \pi (t) + o(\pi(t)) \right) \, \mathrm{d} \frac{\cos(\gamma \ln t)}{t} \\
&= \mathcal K_0 \left( \pi (x) \frac{\cos(\gamma \ln x)}{x} - \int_2^x   \pi(t)\, \mathrm{d}\frac{\cos(\gamma \ln t)}{t} \right) + o\left(\frac{1}{\ln x} + \int_2^x \frac{1}{t\ln t} \, \mathrm{d}t\right) \\
&= \mathcal K_0 \sum_{p \leq x} \frac{\cos (\gamma \ln p)}{p} + o(\ln\ln x).  }

We consider two cases.
\subsubsection*{Case 1: $\gamma \neq 0$}
Here, $\mathcal K = 0$, and Lemma \ref{lem:sump1} implies that
$$ \sum_{p \leq x} \frac{\cos (\gamma \ln p)}{p} = O_\gamma(1).$$
Thus \eqref{eqn:cosafterPS2} is $o(\ln \ln x)$, so we derive the desired result when $\gamma \neq 0$.

\subsubsection*{Case 2: $\gamma = 0$} 
Here, $\mathcal K = \mathcal K_0$, and we have 

\es{ \label{eqn:cosbigcoscase2}\sum_{\substack{p \leq x \\ \lambda_f(p) \neq 0 } } \frac{\cos(\beta \ln (2|\cos \theta_p|)) \cos(\gamma \ln p)}{p} = \mathcal K \sum_{p \leq x}\frac{1}{p} + o(\ln\ln x) = \mathcal K \ln \ln x + o(\ln \ln x),}
as desired.

\end{proof}

\section{Proof of Corollary \ref{cor:group} -- the group of non-Benford multiplicative functions}

Let $h: \mathbb{N}\to\R - \{0\}$ be a multiplicative function. Since  $\mathbb{D}(e^{2\pi i \ell \log_{10} |h(n)|}, n^{i\alpha};\infty) = \mathbb{D}(e^{2\pi i \ell \log_{10} |1/h(n)|}, n^{-i\alpha};\infty)$, it is immediate from Theorem \ref{thm:Benfordcri2} that $h$ is Benford if and only if $1/h$ is Benford.

Thus, it is enough to prove that if $h, h'\colon \mathbb{N}\to\R - \{0\}$ are multiplicative functions for which neither $\{h(n)\}$ nor $\{h'(n)\}$ is a strong Benford sequence, then $\{h(n)h'(n)\}$ is also not a strong Benford sequence. Let 
$$ f_{\ell}(n) = e^{2\pi i \ell \log_{10}|h(n)|}, \quad f'_{\ell}(n) = e^{2\pi i \ell \log_{10}|h'(n)|}, \quad\text{and}\quad F_{\ell}(n) = e^{2\pi i \ell \log_{10}|h(n) h'(n)|}.$$

By Theorem \ref{thm:Benfordcri2}, there are nonzero integers $\ell, \ell'$ as well as real numbers $\alpha, \alpha'$, for which $\mathbb{D}(f_{\ell}, n^{i\alpha}; \infty) < \infty$ and $\mathbb{D}(f'_{\ell'}, n^{i\alpha'}; \infty) < \infty$. By the triangle inequality,
\begin{align*} \mathbb{D}(F_{\ell\ell'}, n^{i(\alpha\ell' + \alpha'\ell)}; \infty) &= \mathbb{D}(f_{\ell}(n)^{\ell'} f'_{\ell'}(n)^{\ell}, n^{i(\alpha\ell' + \alpha'\ell)}; \infty) \\ &\le |\ell'|\cdot \mathbb{D}(f_{\ell}(n),n^{i\alpha};\infty) + |\ell| \cdot \mathbb{D}(f'_{\ell'}(n),n^{i\alpha'};\infty) < \infty. \end{align*}
Applying Theorem \ref{thm:Benfordcri2} once more,  the sequence $\{h(n)h'(n)\}$ is not strongly Benford.

\section*{Acknowledgement}
V.C. and X.L. acknowledge support from a Simons Foundation Collaboration Grant for Mathematicians. V.C. is also supported by NSF grant DMS-2101806, and P.P. is supported by NSF grant DMS-2001581.



\end{document}